\DeclareSymbolFont{SY}{U}{psy}{m}{n}
\DeclareMathSymbol{\emptyset}{\mathord}{SY}{'306}
\theoremstyle{plain}
\newtheorem{thm}{Theorem}[section]
\newtheorem{cor}[thm]{Corollary}
\newtheorem{lem}[thm]{Lemma}
\newtheorem{prop}[thm]{Proposition}
\theoremstyle{definition}
\newtheorem{defn}[thm]{Definition}
\newtheorem{rem}[thm]{Remark}
\numberwithin{equation}{section}
\newcommand{\bSB}{\boldsymbol{B}}
\def\W{With respect to}
\def\beq{\begin{eqnarray}}
\def\eeq{\end{eqnarray}}
\def\beqa{\begin{eqnarray*}}
\def\eeqa{\end{eqnarray*}}
\begin{document}

\title[Curvature inequalities and the Wallach set]{Curvature inequalities for operators\\
in the Cowen-Douglas class and localization\\ of the Wallach set}

\author{Gadadhar Misra and Avijit Pal}

\address{department of mathematics,
Indian Institute of Science, Bangalore - 560 012, India}
\email{gm@math.iisc.ernet.in} \email{apal@math.iisc.ernet.in}

\keywords{Cowen-Douglas class, Bergman kernel, Bergman metric, infinite divisibility, curvature inequality, contractive and completely contractive homomorphisms, Wallach set}

\subjclass[2010]{Primary 47A13; Secondary 32M15, 32A36 and 46E22}

\thanks{The work of G.Misra was supported, in part, through the J C Bose National Fellowship and UGC-SAP IV.
The work of A. Pal was supported, in part, through the UGC-NET
Research Fellowship and the IFCAM Research Fellowship. The results of this paper form part of his PhD thesis at the Indian Institute of Science.}

\begin{abstract} For any bounded domain $\Omega$ in $\mathbb C^m,$ let ${\mathrm B}_1(\Omega)$ denote the Cowen-Douglas class of commuting $m$-tuples of bounded linear operators.
For an $m$-tuple $\boldsymbol T$ in the Cowen-Douglas class ${\mathrm B}_1(\Omega),$ let $N_{\boldsymbol T}(w)$ denote
the restriction of $\boldsymbol T$ to the subspace ${\cap_{i,j=1}^m\ker(T_i-w_iI)(T_j-w_jI)}.$
This commuting $m$-tuple $N_{\boldsymbol T}(w)$ of $m+1$ dimensional operators induces a homomorphism $\rho_{_{\!N_{\boldsymbol T}(w)}}$ of the polynomial ring $P[z_1, \cdots, z_m],$ namely, $\rho_{_{\!N_{\boldsymbol T}(w)}}(p) = p\big (N_{\boldsymbol T}(w) \big ),\, p\in P[z_1, \cdots, z_m].$ We study the contractivity and complete contractivity of the homomorphism $\rho_{_{\!N_{\boldsymbol T}(w)}}.$ Starting from the
homomorphism $\rho_{_{\!N_{\boldsymbol T}(w)}},$ we construct a natural class of
homomorphisms  $\rho_{_{\!{\boldsymbol N}^{(\lambda)}(w)}}, \lambda>0,$ and relate the properties of
$\rho_{_{\!{\boldsymbol N}^{(\lambda)}(w)}}$ to those of  $\rho_{_{\!N_{\boldsymbol T}(w)}}.$ Explicit examples arising
from the multiplication operators on the Bergman space of $\Omega$
are investigated in detail. Finally, it is shown that contractive
properties of $\rho_{_{\!N_{\boldsymbol T}(w)}}$ is equivalent to an  inequality  for the curvature of the Cowen-Douglas bundle $E_{\boldsymbol T}$.
\end{abstract}

\maketitle
\section{Introduction}
We recall the  definition of the well known class of operators
$\mathrm B_n(\Omega)$ which was introduced in the foundational
paper of Cowen and Douglas (cf. \cite{cowen}). An alternative
point of view was discussed in the paper of Curto and Salinas (cf.
\cite{curto}).
\begin{defn}
The class $\mathrm B_n(\Omega)$ consists of $m$-tuples of commuting
bounded operators  $\boldsymbol T=(T_1, T_2, \ldots, T_m)$ on a Hilbert
space $\mathcal H$ satisfying the following conditions:
\begin{itemize}

\item for $w=(w_1,\dots , w_m)\in\Omega,$ the dimension of the
joint kernel $\bigcap_{k=1}^{m}\ker(T_{k}-w_{k}I)$ is $n$,

\item for $w\in\Omega$ and $h\in \mathcal H,$ the operator $D_{\boldsymbol
T-wI} :\mathcal H \rightarrow {\mathcal H \oplus \ldots
\oplus \mathcal H},$ defined by the rule: $$D_{\boldsymbol
T-wI}h= \big ((T_{1}-w_1I)h, \ldots ,(T_{m}-w_mI)h\big )$$
has closed range,

\item the closed
linear span of $\{\bigcap_{k=1}^{m}\ker(T_{k}-w_{k}I):w\in\Omega\}$  is $\mathcal H.$
\end{itemize}
\end{defn}
The commuting $m$-tuple $N_{\boldsymbol T}(w)$ is obtained by restricting $T_i$ to the $(m+1)$
dimensional subspace $\mathcal N(w):= \cap_{i,j=1}^m {\ker} (T_i -
w_iI)(T_j -w_jI).$ These commuting $m$-tuples of finite
dimensional operators are included, for instance, in
the (generalized) class of examples due to Parrott (cf. \cite{BB, gmisra}).
The study of the
contractivity and complete contractivity of the homomorphisms
induced by these localization operators leads to interesting
problems in geometry of finite dimensional Banach spaces (cf.
\cite{BB,vern}).

For operators in the the Cowen-Douglas
class $\mathrm B_1(\Omega),$ there exists a holomorphic choice $\gamma(w)$ of
eigenvector at $w.$ The $(1,1)$ - form
$$ \mathsf K(w) = - \sum_{i=1}^m \frac{\partial^2}{\partial w_i \bar{\partial}w_j} \log \|\gamma(w)\|^2 dw_i \wedge d\bar{w}_j$$
is then seen to be a complete unitary invariant for these
operators. It will be useful for us to work with the matrix of 
co-efficients  of the $(1,1)$ - form defining the curvature
$\mathsf K$, namely,
$$\mathcal K_{\boldsymbol T}(w):= - \left(\!\!\left (\frac{\partial^2}{\partial
w_i\partial\bar{w}_j
}\log\|\gamma(w)\|^2\right)\!\!\right)_{i,j=1}^{m}.$$
The subspace $\mathcal N(w)$ is easily seen to be
spanned by the vectors $\{\gamma(w), (\partial_1 \gamma)(w),
\ldots , (\partial _m \gamma)(w)\}.$ The localization $N_{\boldsymbol
T}(w)$ represented with respect to the orthonormal basis obtained
via the Gram-Schmidt process from these vectors, takes the form:
$$
\Big ( \begin{smallmatrix} w_1 & \boldsymbol \alpha_1(w) \\ 0 & w_1\!I_m
\end{smallmatrix}\Big ), \ldots  ,\Big ( \begin{smallmatrix} w_m &
\boldsymbol \alpha_m(w) \\ 0 & w_m\!I_m \end{smallmatrix}\Big ).
$$
Cowen and Douglas, for $m=2$,  had shown that the curvature
$\mathcal K_{\boldsymbol T}(w)$ appears directly in the localization of the
operator $\boldsymbol T.$ Indeed, for all $m\geq 1,$ we show that the matrix of inner
products $ \big (\!\! \big ( \langle \boldsymbol \alpha_i(w) , \boldsymbol \alpha_j(w)
\rangle \big ) \!\!\big ) $ equals $- \mathcal K_{\boldsymbol T}(w)^{-1}.$

As is well-known, without loss of
generality, a Cowen-Douglas operator $\boldsymbol T$ may be thought of
as the adjoint of the commuting tuple  of multiplication operators
by the coordinate functions on a functional Hilbert space. The
holomorphic section $\gamma$, in this representation, becomes
anti-holomorphic, namely  $K_w(\cdot),$ where $K$ is the
reproducing kernel. One might ask what properties of the
multiplication operators (or the adjoint) are determined by the
local operators. For instance, let $M$ be the multiplication (by the coordinate function) operator on a Hilbert space consisting of  holomorphic functions
 on the unit disc $\mathbb D$ and possessing a reproducing kernel $K.$  The contractivity of the homomoprhism, induced by the operator $M,$ of the disc algebra (this is the same
 as requiring $\|M\| \leq 1,$ thanks to the von Neumann inequality) ensures
contractivity of the homomorphisms $\rho_{_{\!N_{M^*}(w)}}$ induced by
the local operators $N_{M^*}(w),$ $w\in \mathbb D,$ which is itself equivalent to an inequality for the curvature $\mathcal K_{M^*}$.
An example is given in \cite{shibu} showing that the contractivity of the homomorphisms
$\rho_{_{\!N_{M^*}(w)}}$ need not imply $\|M\| \leq
1$ and that the
converse is valid only after imposing some additional conditions. In
this paper, for an arbitrary bounded symmetric domain, we
construct such examples by exploiting  properties typical of the
Bergman kernel on these  domains.

In another direction, for any positive definite kernel $K$ and a positive real number $\lambda,$ the function $K^\lambda$ obtained by polarizing the real analytic function
$K(w,w)^\lambda,$ defines a Hermitian form, which is not necessarily positive definite. The determination of the Wallach set
$$\mathcal W_\Omega:=\{\lambda: K^\lambda(z,w)\mbox{\rm ~is positive definite~}\}$$
for this kernel is an important problem. The Wallach set was  first defined for the Bergman kernel of a bounded symmetric domain. Except in that case (cf. \cite{FK}), very little is known about the Wallach set in general. Here, for all $(0 < )\lambda$ and a fixed but arbitrary $w$ in $\Omega,$ we show that $K^\lambda(z,w)$ is positive definite when restricted to the $m+1$ dimensional subspace  
$$\mathcal {\mathcal N}^{(\lambda)}(w):= \vee\{ K^\lambda(\cdot, w), \bar{\partial}_1 K^\lambda(\cdot, w), \ldots  \bar{\partial}_m K^\lambda(\cdot, w)\}.$$   
This means
that for an arbitrary choice of complex numbers $\alpha_0, \ldots
, \alpha_m,$
$$ \sum_{i,j=0}^m\alpha_i \bar{\alpha}_j \big (\partial_i \bar{\partial}_j K^\lambda\big )(w,w) > 0$$
for a fixed but arbitrary $w\in \Omega$ and all $\lambda > 0.$ Here $\partial_0$ is set to be the scalar $1.$

For an $m$-tuple of operators $\boldsymbol T$ in $\mathcal {\mathrm
B}_n(\Omega)$, Cowen and Douglas establish the existence of a
non-vanishing holomorphic map $\gamma:\Omega_0\rightarrow\mathcal H$
with $\gamma(w)$ in $\bigcap_{k=1}^{m}\ker(T_{k}-w_{k}I)$, $w$ in
some open  subset $\Omega_0$ of $\Omega.$ We fix such an open set
and call it $\Omega$. The map $\gamma$ defines a holomorphic
Hermitian vector bundle, say $E_{\boldsymbol T}$, on $\Omega$.
They show that the equivalence class of the vector bundle
$E_{\boldsymbol T}$ determines the unitary equivalence class of the operator $\boldsymbol T$ and
conversely. The determination of the
 equivalence class of the operator $\boldsymbol T$ in $\mathrm B_1(\Omega)$ then is  particularly simple since
 the curvature $\mathsf K(w)$ of the line bundle $E_{\boldsymbol T}$
is a complete invariant. We reproduce the
well-known proof of this fact for the sake of completeness.

Suppose that $E$ is a holomorphic Hermitian line bundle over a
bounded domain $\Omega\subseteq \mathbb C^m$. Pick a holomorphic
frame  $\gamma$ for the line bundle $E$ and let $\Gamma(w)=
\langle{\gamma_w , {\gamma_w}\rangle}$ be the Hermitian metric.
The curvature $(1,1)$ form $\mathsf K(w)\equiv 0$ on an open
subset $\Omega_0 \subseteq \Omega$ if and only if $\log \Gamma$ is
harmonic on $\Omega_0$.  Let $F$ be a second line bundle over the
same domain $\Omega$ with the metric $\Lambda$ with respect to a
holomorphic frame $\eta$.  Suppose that the two curvatures
$\mathsf K_E$ and $\mathsf K_F$ are equal on the open subset
$\Omega_0$. It then follows that $u=\log (\Gamma/\Lambda)$ is
harmonic on this open subset.  Thus there exists a harmonic
conjugate $v$ of $u$ on $\Omega_0$, which we assume without loss
of generality to be simply connected. For $w\in\Omega_0$, define
$\tilde{\eta}_w = e^{(u(w)+iv(w))/2} \eta_w$. Then clearly,
$\tilde{\eta}_w$ is a new holomorphic frame for $F$. Consequently,
we have the metric $\tilde{\Lambda}(w) = \langle{\tilde{\eta}_w,
\tilde{\eta}_w \rangle}$ for $F$ and  we see that
\begin{align*}
\tilde{\Lambda}(w) &= \langle{\tilde{\eta}_w, \tilde{\eta}_w\rangle}\\
&= \langle{e^{(u(w)+iv(w))/2} {\eta}_w, e^{(u(w)+iv(w))/2}{\eta}_w \rangle}\\
&= e^{u(w)}\langle{{\eta}_w, {\eta}_w \rangle}\\
&= \Gamma(w).
\end{align*}
This calculation shows that the map $U:{\tilde{\eta}}_w \mapsto \gamma_w$
defines an isometric holomorphic bundle map between $E$ and $F$.
As shown in  \cite[Theorem 1]{douglas}, the map 
\begin{equation}\label{CD unitary}
U\Big ( \sum_{|I| \leq n} \alpha_I (\bar{\partial}^I
\eta)({w_0})\Big ) = \sum_{|I| \leq n} \alpha_I (\bar{\partial}^I
\gamma)(w_0), \,\, \alpha_I \in \mathbb C,
\end{equation}
where $w_0$ is a fixed point in $\Omega$ and $I$ is a multi-index
of length $n$, is well-defined, extends to a unitary operator on
the Hilbert space spanned by the vectors $(\bar{\partial}^I
\eta)({w_0})$ and intertwines  the two $m$-tuples of operators in
$\mathrm B_1(\Omega)$ corresponding to the vector bundles $E$ and
$F$.

It is natural to ask what other properties of $\boldsymbol T$ are
directly reflected in the curvature $\mathsf K.$ One such property
that we explore here is the contractivity and complete
contractivity of the homomorphism induced by the $m$-tuple
$\boldsymbol T$ via the map $\rho_{\boldsymbol T}: f\to
f(\boldsymbol T)$, $f\in \mathcal O(\Omega),$ where $\mathcal
O(\Omega)$ is the set of all holomorphic function in a 
neighborhood of $\overline{\Omega}.$


We recall the curvature inequality from Misra and Sastry {cf.
(\cite[Theorem 5.2]{GM}}) and produce a large family of examples to
show that the ``curvature inequality'' does not imply
contractivity of the homomorphism $\rho_{\boldsymbol T}$.

\section{Localization of Cowen-Douglas operators}
Fix an operator $\boldsymbol T$ in $\mathrm B_1(\Omega)$ and
let $N_{\boldsymbol T}(w)$ be the $m$-tuple of operators $(N_1(w),
\ldots, N_m(w)),$ where $N_i(w)= (T_{i}-w_{i}I)|_{\mathcal N(w)},$
$i=1,\ldots, m.$ Clearly, $N_i(w)N_j(w)=0$ for all $1\leq i,j \leq
m.$ Hence the commuting $m$-tuple $N(w)$ is jointly nilpotent.
This $m$-tuple of nilpotent operators $N_{\boldsymbol T}(w),$ since $(T_i-w_iI)\gamma(w) =0$ and
$(T_i-w_iI) (\partial_j\gamma)(w) = \delta_{ij} \gamma(w)$ for $1
\leq i,j \leq m$, has
the matrix representation $N_k(w) = \left ( \begin{smallmatrix}
0 & e_k \\
 0 & \boldsymbol 0\\
\end{smallmatrix}\right ),$  where $e_k$ is the vector $(0,\ldots , 1, \ldots ,0)$ with the $1$ in the $k${\tt th} slot, $k=1,\ldots , m.$   
Representing $N_k(w)$ with respect to an orthonormal basis in
$\mathcal N(w)$, it is possible to read off the curvature of
$\boldsymbol T$ at $w$ using the relationship:
\begin{equation}\label{curvform}
\big (-\mathcal K_{\boldsymbol T}(w)^{\rm t}\big )^{-1}=\big (
\!\! \big ( {\rm tr}\big ( N_{k}(w)\overline{N_{j}(w)}^{\rm t}\big
)\, \big )\!\!\big )_{k,j=1}^m = A(w)^{\rm
t}\overline{A(w)},\end{equation} where the $k${\tt th}-column of
$A(w)$ is the vector $\boldsymbol \alpha_k$ (depending on $w$)
which appears in the matrix representation of $N_k(w)$ with
respect to any choice of an orthonormal basis in $\mathcal N(w)$.

This formula is established for a pair of operators in $\mathrm
B_1(\Omega)$ (cf. \cite[Theorem 7]{douglas}).  However, we will
verify it for an $m$-tuple $\boldsymbol T$ in $\mathrm
B_1(\Omega)$ for any $m \geq1.$

Fix $w_0$ in $\Omega$. We may assume without loss of generality
that $\|\gamma(w_0)\|=1$. The function $\langle \gamma(w),
\gamma(w_0)\rangle$ is invertible in some neighborhood of $w_0$.
Then setting $\hat{\gamma}(w):=  \langle \gamma(w),
\gamma(w_0)\rangle^{-1} \gamma(w)$, we see that
$$\langle \partial_k \hat{\gamma}(w_0), \gamma(w_0) \rangle = 0, \,\, k=1,2,\ldots, m.$$
Thus $\hat{\gamma}$ is another holomorphic section of $E$.  The
norms of the two sections $\gamma$ and $\hat{\gamma}$ differ by
the absolute square of a holomorphic function, that is
$\tfrac{\|\hat{\gamma}(w)\|}{\|\gamma(w)\|} = |\langle \gamma(w),
\gamma(w_0)\rangle|$. Hence the curvature is independent of the
choice of the holomorphic section.
 Therefore, without loss of generality, we will prove the claim assuming,
for a fixed but arbitrary $w_0$ in $\Omega$, that
\begin{enumerate}
\item[{(i)}] $\|\gamma(w_0)\|=1$, \item[{(ii)}]
$\gamma(w_0)$ is orthogonal to $(\partial_k\gamma)(w_0)$,
$k=1,2,\ldots , m$.
\end{enumerate}

Let $G$ be the Grammian corresponding to the $m+1$ dimensional
space spanned by the vectors $$\{\gamma(w_0),
(\partial_1\gamma)(w_0),  \ldots , (\partial_m \gamma)(w_0)\}.$$
This is just the space $\mathcal N(w_0)$. Let $v, w$ be any two
vectors in $\mathcal N(w_0)$. Find $\boldsymbol c=(c_0,\ldots,
c_m), \boldsymbol d=(d_0, \ldots, d_m)$ in $\mathbb C^{m+1}$ such
that $v=\sum_{i=0}^{m}c_i {\partial}_i\gamma(w_0)$ and
$w=\sum_{j=0}^{m}d_j{\partial}_j \gamma(w_0).$  Here
$(\partial_0\gamma)(w_0) = \gamma(w_0)$. We have
\begin{align*}\langle v, w\rangle&=\langle\sum_{i=0}^{m}c_i{\partial}_i\gamma(w_0),
\sum_{j=0}^{m}d_j {\partial}_j \gamma(w_0)\rangle\\
&= \langle G^{\rm t}(w_0)\boldsymbol c, \boldsymbol d\rangle_{\mathbb C^{m+1}}\\
&= \langle (G^{\rm t})^{\frac{1}{2}}(w_0)\boldsymbol c, (G^{\rm
t})^{\frac{1}{2}}(w_0)\boldsymbol d\rangle_{\mathbb C^{m+1}}.
\end{align*}
Let $\{e_i\}_{i=0}^{m}$ be the standard orthonormal basis for
$\mathbb C^{m+1}$. Also, let $(G^{\rm
t})^{-\frac{1}{2}}(w_0)e_i:=\boldsymbol \alpha_i(w_0)$, where
$\boldsymbol \alpha_i(j)(w_0) = \alpha_{j i}(w_0)$, $i=0,1,\ldots,
m$. We see that the vectors  $\varepsilon_i:=\sum_{j=0}^m
\alpha_{ji} (\partial_j \gamma)(w_0)$, $i=0,1, \ldots ,m,$ form an
orthonormal basis in $\mathcal N(w_0)$:  \begin{align*}\langle
\varepsilon_i, \varepsilon_l\rangle &=\big\langle
\sum_{j=0}^{m}\alpha_{j i}{\partial}_j \gamma(w_0),
\sum_{p=0}^{m}\alpha_{p l}{\partial}_p \gamma(w_0)\big\rangle\\&=
\langle(G^{\rm t})^{-\frac{1}{2}}(w_0)\boldsymbol \alpha_i, (G^{\rm
t})^{-\frac{1}{2}}(w_0)\boldsymbol
\alpha_l\rangle\\&=\delta_{il},\end{align*} where
$\delta_{il}$ is the Kronecker delta. Since $N_k\big
(\,(\partial_j\gamma)(w_0) \,\big )= \gamma(w_0)$ for $j=k$ and
$0$ otherwise, we have $N_k(\varepsilon_i) = \Big (
\begin{smallmatrix} 0 & \boldsymbol \alpha_k^{\rm t}\\ 0 & 0
\end{smallmatrix} \Big )$. Hence
\begin{align*}
{\rm tr}\big ( N_i(w_0) N_j^*(w_0) \big ) &=  \boldsymbol{\alpha_i}(w_0)^{\rm t} \overline{\boldsymbol \alpha}_j(w_0)\\
&= \big ( (G^{\rm t})^{-\frac{1}{2}}(w_0)e_i\big )^{\rm t}\overline{\big ( (G^{\rm t})^{-\frac{1}{2}}(w_0)e_j\big )}\\
&= \langle {G}^{-\frac{1}{2}}(w_0)e_i , {G}^{-\frac{1}{2}}e_j(w_0)
\rangle = {(G^{\rm t})}^{-1}(w_0)_{ij}.
\end{align*}
Since the curvature, computed with respect to the holomorphic
section $\gamma$ satisfying the conditions {(i)} and
{(ii)}, is of the form
\begin{align*}
-\mathcal K_{\boldsymbol T}(w_0)_{ij} &= \frac{\partial^2}{\partial w_i \bar{\partial} w_j}\log \|\gamma(w)\|^2_{|w=w_0}\\
&= \Big ( \frac{\|\gamma(w)\|^2 \big ( \frac{\partial^2
\gamma}{\partial w_i \partial\bar{w}_j} \big )(w) - \big
(\frac{\partial \gamma}{\partial w_i}\big )(w)  \big (
\frac{\partial \gamma }{\partial \bar{w}_j}\big ) (w)}
{\|\gamma(w)\|^4}\Big )_{|w=w_0}\\
&= \big ( \frac{\partial^2 \gamma}{\partial w_i \partial
\bar{w}_j} \big )(w_0)=G(w_0)_{ij}, \end{align*} we have verified
the claim \eqref{curvform}.

The following theorem was proved for $m=2$ in ({cf. \cite[Theorem
7]{douglas}}). However, for any natural number $m$, the proof is
evident from the preceding discussion.
\begin{thm}
Two $m$-tuples of operators $\boldsymbol T$ and
$\tilde{\boldsymbol T}$ in $\mathrm B_1(\Omega)$ are unitarily
equivalent if and only if $N_k(w)$ and $\tilde{N}_k(w)$ are
simultaneously unitarily equivalent for $w$ in some open subset of
$\Omega$.
\end{thm}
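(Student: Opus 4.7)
The plan is to reduce both directions of the theorem to the fact, reviewed in the introduction through the construction $\tilde{\eta}_w = e^{(u(w)+iv(w))/2}\eta_w$ and the unitary \eqref{CD unitary}, that the curvature $\mathcal K_{\boldsymbol T}$ is a complete unitary invariant for operators in $\mathrm B_1(\Omega)$, coupled with the trace-curvature identity \eqref{curvform} just established.

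The forward direction is immediate: if $U:\mathcal H\to \tilde{\mathcal H}$ is unitary with $U T_k = \tilde T_k U$ for every $k$, then $U$ carries $\bigcap_{i,j=1}^m\ker(T_i - w_iI)(T_j-w_jI)$ isometrically onto the analogous subspace for $\tilde{\boldsymbol T}$, so that $U|_{\mathcal N(w)}$ is a single unitary that simultaneously intertwines $N_k(w)$ with $\tilde N_k(w)$ for every $k = 1,\dots,m$.

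For the reverse direction, assume that for each $w$ in some open $\Omega_0\subseteq\Omega$ there is a unitary $V_w:\mathcal N(w)\to \tilde{\mathcal N}(w)$ with $V_wN_k(w)V_w^{*}=\tilde N_k(w)$ for all $k$. Since a \emph{single} $V_w$ implements the equivalence simultaneously for all $k$, every cross trace is preserved:
$$\mathrm{tr}\bigl(N_k(w)\overline{N_j(w)}^{\rm t}\bigr)=\mathrm{tr}\bigl(\tilde N_k(w)\overline{\tilde N_j(w)}^{\rm t}\bigr),\qquad 1\leq k,j\leq m,\ w\in\Omega_0.$$
Formula \eqref{curvform} then forces $\mathcal K_{\boldsymbol T}(w) = \mathcal K_{\tilde{\boldsymbol T}}(w)$ for every $w\in \Omega_0$. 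Now apply the curvature-rigidity construction reproduced in the introduction: with $u = \log(\Gamma/\Lambda)$ harmonic on $\Omega_0$, extract a harmonic conjugate $v$ on a simply connected sub-neighborhood, pass to the adjusted frame $\tilde\eta_w = e^{(u(w)+iv(w))/2}\eta_w$, and obtain an isometric holomorphic bundle map between $E_{\boldsymbol T}$ and $E_{\tilde{\boldsymbol T}}$ there. Formula \eqref{CD unitary} then extends this bundle map to a unitary $U$ on the full Hilbert space that simultaneously intertwines $\boldsymbol T$ and $\tilde{\boldsymbol T}$.

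The one point to watch carefully is the role of \emph{simultaneous} unitary equivalence: a different unitary for each index $k$ would only preserve the diagonal traces $\mathrm{tr}(N_k(w)N_k(w)^{*})$ and not the off-diagonal ones $\mathrm{tr}(N_k(w)N_j(w)^{*})$ for $k\neq j$, and hence would not determine the full matrix $\mathcal K_{\boldsymbol T}(w)$ via \eqref{curvform}. This is the essential place where the hypothesis is used, and once the curvature identity holds on an open set, the completeness of $\mathcal K_{\boldsymbol T}$ as an invariant closes the argument.
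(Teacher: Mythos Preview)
Your proposal is correct and follows essentially the same approach as the paper: in both, the simultaneous unitary equivalence of the $N_k(w)$ forces equality of the cross traces $\mathrm{tr}\bigl(N_k(w)N_j(w)^*\bigr)$, hence equality of curvatures via \eqref{curvform}, and then the completeness of the curvature invariant (via \eqref{CD unitary}) yields unitary equivalence of $\boldsymbol T$ and $\tilde{\boldsymbol T}$. For the other direction your argument is in fact slightly more direct than the paper's --- you restrict the given global intertwiner to $\mathcal N(w)$, whereas the paper invokes the specific unitary of \eqref{CD unitary} --- but the substance is identical.
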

\begin{proof}
Let us fix an arbitrary point $w$ in $\Omega$. In what follows,
the dependence on this $w$ is implicit. Suppose that there exists
a unitary operator $U:\mathcal N\rightarrow \widetilde{\mathcal
N}$ such that $UN_i=\tilde{N_i}U$, $i= 1,\ldots,m.$
For $1 \leq i, j \leq m,$ we have
\begin{align*}{\rm tr}\big ( \tilde{N_i}
\tilde{N_j}^* \big )&={\rm tr}\big (\big(U N_iU^*\big)\big(U
N_jU^*\big)^* \big )\\&={\rm tr}\big ( UN_i N_j^* U^*\big
)\\&={\rm tr}\big ( N_i N_j^* U^*U\big )\\&={\rm tr}\big ( N_i
N_j^* \big ).
\end{align*}
Thus the curvature of the operators $\boldsymbol T$ and $
{\tilde{\boldsymbol T}}$ coincide making  them unitarily
equivalent proving the Theorem in one direction. In the other
direction, observe that if the operators $\boldsymbol T$ and $
{\tilde{\boldsymbol T}}$ are unitarily equivalent then the unitary
$U$ given in \eqref{CD unitary} evidently maps $\mathcal N$ to
$\tilde{\mathcal N}$. Thus the restriction of $U$ to the subspace
$\mathcal N$ intertwines $N_k$ and $\tilde{N}_k$ simultaneously
for $k=1,\cdots ,m$.
\end{proof}

As is well-known (cf. \cite{curto} and \cite{cowen}), the $m$-tuple
$\boldsymbol T$ in $\mathrm B_1(\Omega)$ can be represented as the
adjoint of the $m$-tuple of multiplications $\boldsymbol M$ by the co-ordinate
functions on a Hilbert space $\mathcal H$ of holomorphic functions
defined on $\Omega^* = \{\bar{w}\in \mathbb C^m: w\in \Omega\}$
possessing a reproducing kernel $K:\Omega^*\times \Omega^* \to
\mathbb C$ which is holomorphic in the first variable and
anti-holomorphic in the second.

In this representation, if we set $\gamma(w) = K(\cdot, \bar{w})$,
$w\in \Omega$, then we obtain a natural non-vanishing
``holomorphic" map into the Hilbert space $\mathcal H$ defined on
$\Omega$.

The localization $N_{\boldsymbol T}(w)$ obtained from the commuting tuple of
operators $\boldsymbol T$
 defines a homomorphism $\rho_{_{\!N_{\boldsymbol T}(w)}}$ on the algebra $\mathcal O(\Omega)$ of functions,
each holomorphic in some neighborhood of the closed set $\bar{\Omega},$ by the rule
\begin{equation} \label{homN}
\rho_{_{\!N_{\boldsymbol T}(w)}}(f)=\left(\begin{matrix}f(w) & \nabla f(w) A(w)^{\rm t}\\
0 & f(w)I_m\end{matrix}\right),\,\, f\in \mathcal O(\Omega).
\end{equation}
We recall from  \cite[Theorem 5.2]{GM} that the
contractivity of the homomorphism
implies the curvature inequality $\|\big(\mathcal K_{\boldsymbol
T}(w)^{\rm t}\big)^{-1}\| \leq 1$. Here $\mathcal K_{\boldsymbol
T}(w)$ is thought of as a linear transformation from the normed
linear space $(\mathbb C^m, C_{\Omega,w})^*$ to the normed linear
space $(\mathbb C^m, C_{\Omega,w}),$ where $C_{\Omega, w}$ is the
Carath\'{e}odory metric of $\Omega$ at $w.$ The operator norm is
computed accordingly with respect to these norms.
\subsection{Infinite divisibility}
Let $K$ be a positive definite kernel defined on the domain
$\Omega$ and let $\lambda > 0$ be arbitrary. Since $K^\lambda$ is
a real analytic function defined on $\Omega$, it admits a power
series representation of the form
$$
K^\lambda(w,w) = \sum_{I,\!J} a_{I,J}(\lambda) (w-w_0)^I
\overline{(w-w_0)}^J
$$
in a small neighborhood of a fixed but arbitrary $w_0\in \Omega$.
The polarization $K^\lambda(z,w)$ is the function represented by
the power series
$$
K^\lambda(z,w) = \sum_{I,\!J} a_{I\,J}(\lambda) (z-w_0)^I
\overline{(w-w_0)}^J,\,\,w_0\in \Omega.
$$
It follows that the polarization $K^\lambda(z,w)$ of the function
$K(w,w)^\lambda$ defines a Hermitian kernel, that is,
$K^\lambda(z,w) = \overline{K(w,z)^\lambda}$. Schur's Lemma (cf.
\cite{bhatia})
  ensures the positive definiteness of $K^\lambda$ whenever $\lambda$ is a natural number. However,
  it is not necessary that $K^\lambda$ must be positive definite for all real $\lambda > 0$.
  Indeed a positive definite kernel $K$ with the property that $K^\lambda$ is positive definite for all
  $\lambda >0$ is called infinitely divisible and plays an important role in studying curvature inequalities
  (cf. \cite[Theorem 3.3]{shibu}).

Although, $K^\lambda$ need not be positive definite for all
$\lambda >0$, in general, a related question raised here is
relevant to the study of localization of the Cowen-Douglas
operators.

Let $w_0$ in $\Omega$ be fixed but arbitrary. Also, fix a $\lambda
>0$. Define the mutual inner product  of the vectors
$$
\{(\bar{\boldsymbol{\partial}}^I K^\lambda)(\cdot, w_0): I= (i_1,
\ldots, i_m)\},
$$
by the formula
$$
\langle (\bar{\boldsymbol{\partial}}^J K^\lambda)(\cdot, w_0) ,
(\bar{\boldsymbol{\partial}}^I K^\lambda)(\cdot, w_0) \rangle =
\big ( \boldsymbol{\partial}^I\bar{\boldsymbol{\partial}}^J
K^\lambda \big ) (w_0, w_0).
$$
Now, if $K^\lambda$ were positive definite, for the $\lambda$ we
have picked, then this formula would extend to an inner product on
the linear span of these vectors. The completion of this inner
product space is then a Hilbert space, which we denote by
$\mathcal H^{(\lambda)}$. The reproducing kernel for the Hilbert
space $\mathcal H^{(\lambda)}$ is easily verified to be the
original kernel $K^\lambda$. The Hilbert space $\mathcal
H^{(\lambda)}$ is independent of the choice of $w_0$.

Now, even if $K^\lambda$ is not necessarily positive definite, we
may ask whether this formula defines an inner product on the
$(m+1)$ dimensional space $\mathcal N^{(\lambda)}(w)$ spanned by
the vectors
$$\{K^\lambda(\cdot, w), (\bar{\partial}_1 K^\lambda)(\cdot, w), \ldots , (\bar{\partial}_m K^\lambda)(\cdot, w)\}.$$
An affirmative answer to this question is equivalent to the
positive definiteness of the matrix
$$
\big ( \!\! \big (\, (\partial_i\bar{\partial}_j K^\lambda) (w,
w)\big ) \!\! \big )_{i,j = 0}^m.
$$

Let $\bar{\boldsymbol{\partial}}_m^{\rm t} = \begin{pmatrix}{1,
\partial_1, \ldots, \partial_m}
\end{pmatrix}$ and $\boldsymbol{\partial}_m$ be its conjugate transpose.
Now,
$$\big ( \boldsymbol{\partial}_m \bar{\boldsymbol{\partial}}_m^{\rm t} K^\lambda)(w,w):=
\big ( \!\! \big (\, (\partial_j\bar{\partial}_i K^\lambda)
(w,w)\big ) \!\! \big )_{i,j = 0}^m,\,\, w\in \Omega\subseteq
\mathbb C^m.$$

\begin{thm}\label{locW}
For a fixed but arbitrary $w$ in $\Omega$ and every $\lambda > 0,$  the $(m+1) \times
(m+1)$ matrix $\big ( \boldsymbol{\partial}_m
\bar{\boldsymbol{\partial}}_m^{\rm t} K^\lambda)(w,w)$ is positive
definite.
\end{thm}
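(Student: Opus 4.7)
The plan is to do a Schur-complement reduction on the $(m+1)\times (m+1)$ matrix in question, and then invoke the identity \eqref{curvform} already established in this section, so that positivity of $M^{(\lambda)}(w):=\bigl(\boldsymbol{\partial}_m\bar{\boldsymbol{\partial}}_m^{\rm t}K^\lambda\bigr)(w,w)$ comes down to the positivity of the curvature of the underlying Cowen-Douglas bundle.

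First, apply Leibniz's rule to the polarization $K^\lambda(z,w)=K(z,w)^\lambda$. Abbreviating $K=K(w,w)$, $v=\bigl((\partial_jK)(w,w)\bigr)_{j=1}^m$ (viewed as a column vector in $\mathbb C^m$), and $H=\bigl((\partial_j\bar\partial_iK)(w,w)\bigr)_{i,j=1}^m$, the matrix takes the $1\oplus m$ block form
\[
M^{(\lambda)}(w)\;=\;\begin{pmatrix} K^\lambda & \lambda K^{\lambda-1}\,v^{\rm t}\\[2pt] \lambda K^{\lambda-1}\,\bar v & \lambda(\lambda-1)K^{\lambda-2}\,\bar v\,v^{\rm t}+\lambda K^{\lambda-1}H\end{pmatrix}.
\]
Since $K(w,w)>0$, the top-left $1\times 1$ block is strictly positive, so $M^{(\lambda)}(w)$ is positive definite if and only if its $m\times m$ Schur complement is. The cross-term $\lambda^2 K^{\lambda-2}\bar v\,v^{\rm t}$ almost cancels the $\lambda(\lambda-1)K^{\lambda-2}\bar v\,v^{\rm t}$ in the bottom-right block, leaving the Schur complement equal to $\lambda K^{\lambda-2}\bigl(KH-\bar v\,v^{\rm t}\bigr)$. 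The pointwise identity $K^2\,\partial_j\bar\partial_i\log K = K\,(\partial_j\bar\partial_iK)-(\partial_jK)(\bar\partial_iK)$ then rewrites the Schur complement as
\[
\lambda\,K(w,w)^{\lambda}\,\Bigl(\!\bigl(\partial_j\bar\partial_i\log K(w,w)\bigr)\!\Bigr)_{i,j=1}^m.
\]

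Since $\lambda>0$ and $K(w,w)^\lambda>0$, positive definiteness of $M^{(\lambda)}(w)$ reduces to positive definiteness of the Hermitian $m\times m$ matrix $\bigl(\!\bigl(\partial_j\bar\partial_i\log K(w,w)\bigr)\!\bigr)$. But this matrix is, up to transposition, the negative of the curvature $\mathcal K_{\boldsymbol T}(w)$ of the Cowen-Douglas line bundle $E_{\boldsymbol T}$ associated to the adjoint of the multiplication tuple on $\mathcal H_K$. Equation \eqref{curvform} identifies $\bigl(-\mathcal K_{\boldsymbol T}(w)^{\rm t}\bigr)^{-1}$ with the Gramian $A(w)^{\rm t}\overline{A(w)}$, which is manifestly positive definite; inverting, $-\mathcal K_{\boldsymbol T}(w)^{\rm t}$ is positive definite, and the theorem follows. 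The only real obstacle is bookkeeping — keeping transposes, conjugates, and the identification $\|\gamma(w)\|^2\leftrightarrow K(w,w)$ (across $\Omega$ and $\Omega^*$) consistent through the reduction; the substance is the two-line Schur cancellation above combined with the positivity of the curvature already proved in this section.
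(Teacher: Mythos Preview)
Your proof is correct and, at its core, coincides with the paper's: both arguments take the Schur complement of the $(m+1)\times(m+1)$ matrix with respect to the $(1,1)$ entry $K^\lambda(w,w)$, identify that Schur complement as $\lambda K(w,w)^{\lambda}\bigl(\!\bigl(\partial_j\bar\partial_i\log K(w,w)\bigr)\!\bigr)_{i,j=1}^m$, and then appeal to the fact that this last matrix is positive definite (the paper cites \cite[Proposition 2.1]{shibu}, you cite \eqref{curvform}; the content is the same Gram-matrix identification established earlier in this section).

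The difference is that the paper wraps this computation inside an induction on $m$: it first invokes the inductive hypothesis to assert that the top-left $m\times m$ principal minor is positive definite, then reduces to checking $\det>0$, and only \emph{then} performs the Schur-complement calculation with respect to the $(1,1)$ corner. As your argument shows, the induction is unnecessary --- once the $1\times 1$ corner $K^\lambda(w,w)>0$ and the $m\times m$ Schur complement are both seen to be positive definite, the full matrix is positive definite outright. Your route is therefore the streamlined version of the same proof. One small caveat: your appeal to \eqref{curvform} tacitly assumes that $K$ arises from a Cowen-Douglas tuple (so that the curvature is strictly negative definite, equivalently that $K(\cdot,w),\bar\partial_1K(\cdot,w),\ldots,\bar\partial_mK(\cdot,w)$ are linearly independent); the paper makes the same implicit assumption, so this is not a gap relative to the original.
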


\begin{proof}
The proof is by induction on $m$.  For $m=1$ and any positive
$\lambda$, a direct verification, which follows, shows that $$\big
(\boldsymbol{\partial}_1 \bar{\boldsymbol{\partial}}_1^{\rm t}
K^\lambda\big )(w,w):= \left(  \begin{matrix}
K^{\lambda}(w, w) & \partial_{1}K^{\lambda}(w, w)  \\
\bar{\partial}_{1}K^{\lambda}(w, w) &
\partial_{1}\bar{\partial}_{1} K^{\lambda}(w, w)
\end{matrix}\right)$$ is positive.

Since $K^\lambda (w,w) > 0$ for any $\lambda >0$, the verification
that $\big (\boldsymbol{\partial}_1
\bar{\boldsymbol{\partial}}_1^{\rm t} K^\lambda\big )(w,w)$ is
positive definite amounts to showing that $\det \big
(\boldsymbol{\partial}_1 \bar{\boldsymbol{\partial}}_1^{\rm t}
K^\lambda\big )(w,w) > 0$.  An easy computation gives
\begin{eqnarray*}
\det \big (\boldsymbol{\partial}_1
\bar{\boldsymbol{\partial}}_1^{\rm t} K^\lambda\big )(w,w)
 &=& \lambda K^{2 \lambda -2 }(w,w) \big \{ K(w,w) (\bar{\partial_1} \partial_1 K) (w,w) - |\partial_1 K(w,w)|^2 \big \}
 \\
&=& \lambda K^{2 \lambda  }(w,w)\frac{  \|K(\cdot, w)\|^2
\|(\bar{\partial_1}K) (\cdot,w)\|^2 - |\langle K(\cdot, w),
(\bar{\partial_1} K)(\cdot,w)\,\rangle|^2 } {\|K(\cdot, w)\|^{4}},
\end{eqnarray*}
which is clearly positive since $K(\cdot, w)$ and
$(\bar{\partial}_{1} K)(\cdot, w)$ are linearly independent.

Now assume that $\big ( \boldsymbol{\partial}_{m-1}
\bar{\boldsymbol{\partial}}_{m-1}^{\rm t} K^\lambda\big)(w,w)$ is
positive definite. We note that
$$
\big ( \boldsymbol{\partial}_m \bar{\boldsymbol{\partial}}_m^{\rm
t} K^\lambda\big)(w,w) = \left ( \begin{matrix}\big (
\boldsymbol{\partial}_{m-1} \bar{\boldsymbol{\partial}}_{m-1}^{\rm
t} K^\lambda \big)(w,w) & \big (
\partial_{m} \bar{\boldsymbol{\partial}}_{m-1}^{\rm t}
K^\lambda\big)(w,w)
\\ \big (
\boldsymbol{\partial}_{m-1} \bar{\partial}_{m} K^\lambda\big)(w,w)
& ({\partial}_m \bar{\partial}_m
K^\lambda)(w,w)\end{matrix}\right ).
$$
Since $\big ( \boldsymbol{\partial}_{m-1}
\bar{\boldsymbol{\partial}}_{m-1}^{\rm t} K^\lambda\big)(w,w)$ is
positive definite by the induction hypothesis and for
 $\lambda > 0$, we have $$({\partial}_m \bar{\partial}_m
K^\lambda)(w,w) = \lambda K(w,w)^{\lambda-2}\big \{ K(w,w)
(\partial_{m} \bar{\partial}_{m} K)(w,w)
  +  (\lambda-1) |(\bar{\partial}_{m}K)(w,w)|^2 \big \} > 0,$$
it follows that $\big ( \boldsymbol{\partial}_m
\bar{\boldsymbol{\partial}}_m^{\rm t} K^\lambda\big)(w,w)$ is
positive definite if and only if $\det\big(\big (
\boldsymbol{\partial}_m \bar{\boldsymbol{\partial}}_m^{\rm t}
K^\lambda\big)(w,w)\big) > 0$ (cf. \cite{shibu}). To verify this
claim, we note
$$\big ( \boldsymbol{\partial}_m \bar{\boldsymbol{\partial}}_m^{\rm
t} K^\lambda\big)(w,w) = \left ( \begin{smallmatrix}  K^\lambda (w,w) & B(w,w) \\
B(w,w)^* & D(w,w) \end{smallmatrix}\right ),$$ where $D=\big (
\!\! \big (\, (\partial_j\bar{\partial}_i K^\lambda) (w, w)\big )
\!\! \big )_{i,j = 1}^{m}$ and $B=\big(\partial_{1}
K^{\lambda}(w, w), \ldots,
\partial_{m} K^{\lambda}(w, w)\big).$ Recall that (cf. \cite{hal})
$$\det \big ( \boldsymbol{\partial}_m \bar{\boldsymbol{\partial}}_m^{\rm
t} K^\lambda\big)(w,w) = \det \Big (D(w,w) -\frac{B^*(w,w)B(w,w)}
{K^{\lambda}(w, w)} \Big )\det K^{\lambda}(w, w) .$$ Now,
following  \cite[proposition 2.1(second proof)]{shibu}, we
see that
$$D(w,w) -\frac{B^*(w,w)B(w,w)}{K^{\lambda}(w, w)}
 = \lambda K^{2 \lambda - 2} (w,w) \big ( \!\! \big (\, K^{2}(w,w)({\partial_j\,\bar{\partial}_i}\log K) (w, w)\big )
 \!\! \big )_{i,j = 1}^m,$$  which was shown to be a Grammian.
 Thus $D(w,w) -\frac{B^*(w,w)B(w,w)}{K^{\lambda}(w, w)}$ is
 a positive definite matrix and hence its determinant is positive.
\end{proof}
The Theorem we have just proved says that if $E$ is a holomorphic Hermitian vector bundle corresponding to
a Cowen-Douglas operator, then the first order jet bundle $\mathcal J E$ admits a Hermitian structure
(cf. \cite[Section 4.7]{cowen}). It also   prompts the following definition, which is a localization of
the Wallach set to points in $\Omega.$
\begin{defn}
For $\lambda > 0,$ and a positive definite kernel $K$ defined on the domain $\Omega,$ let $K^\lambda(z,w)$ denote the function obtained by polarizing the real analytic function $K(w,w)^\lambda.$ For any two multi indices $\alpha$ and $\beta,$ let $\alpha \leq \beta$ denote the co-lexicographic ordering. Let
$$\mathcal W_{K,\lambda}(w):=\max \Big\{ n\in \mathbb N\Big \vert{ \Big{(}\!\Big(\frac{\big (\partial_z^\alpha\partial_{\bar{w}}^\beta K^\lambda \big )(z,w)}{\alpha
!\beta !}\Big)\!\Big{)}_{0\leq\alpha,\beta\leq\delta}}_{|\,z=w},\, |\delta| = n,\: \mbox{\rm is positive definite}\Big\}.$$
\end{defn}
\begin{rem}
Following Curto and Salinas \cite[Lemma 4.1 and 4.3]{curto},  for a fixed positive definite kernel $K$ on $\Omega,$  
we note that the kernel $K^\lambda,\,\lambda > 0,$ is positive definite if $\mathcal W_{K,\lambda}(w)$ is infinite for some $w\in \Omega.$  A positive definite kernel $K$ is said to be infinitely divisible if 
for every positive real $\lambda,$ the kernel $K^\lambda$ is also positive  definite. It follows that a kernel $K$ is infinitely divisible if and only if
$\mathcal W_{K,\lambda}(w)$ is not finite for some $w$ in $\Omega$ and every $\lambda >0.$  The preceding theorem shows that $\mathcal W_{K,\lambda}(w) \geq 2$ for all $w\in \Omega$ and all $\lambda >0.$
However, Example $1$ of the paper  \cite[pp. 950]{shibu}, shows that $\mathcal W_{K,\lambda}(0),$ $\lambda<\tfrac{1}{2},$ is exactly $2$ for the unit disc. Therefore, Theorem \ref{locW} is sharp.
For a positive definite kernel $K$ defined on $\Omega,$ we believe, it is important to study the behaviour of $\mathcal W_{K,\lambda}(w),$ $w\in \Omega,$ $\lambda >0.$ 
\end{rem}
\section{Bergman Kernel} For any bounded open connected subset
$\Omega$ of $\mathbb C^m$, let $\bSB_\Omega$ denote the Bergman
kernel of $\Omega$. This is the reproducing kernel of the Bergman
space $\mathbb A^2(\Omega)$ consisting of square integrable
holomorphic functions on $\Omega$ with respect to the volume
measure. Consequently, it has a representation of the form
\begin{equation} \label{Bexpanorth}\boldsymbol B_{\Omega}(z,w)=\sum_{k}\varphi_{k}(z)\overline{\varphi_{k}(w)},
\end{equation} 
where $\{\varphi_k\}_{k=0}^\infty$ is any orthonormal basis of
$\mathbb A^2(\Omega)$. This series is uniformly convergent on
compact subsets of $\Omega \times \Omega.$

We now exclusively study the case of the Bergman kernel on the
unit ball $\mathcal D$ (with respect to the usual operator norm)
in the linear space of all $r \times s$ matrices $\mathcal
M_{rs}(\mathbb C)$. The unit ball $\mathcal D$ may be also
described as
$$\mathcal D=\{Z \in\mathcal M_{rs}(\mathbb C):
I-ZZ^* \geq 0\}.$$ The Bergman kernel $\boldsymbol B_{\mathcal D}$ for the domain $\mathcal D$ 
is given by the formula $\boldsymbol B_{\mathcal D}(Z,W)=\det (I-ZW^*)^{-p},$ where
$p=r+s.$ In what follows we give a simple proof of this.

As an immediate consequence of the change of variable formula for
integration, we have the transformation rule for the Bergman
kernel. We provide the straightforward proof.

\begin{lem}\label{lemm:main2}Let $\Omega$ and $\tilde{\Omega}$ be two domains in
$\mathbb C^m$ and $\varphi:\Omega\rightarrow \tilde{\Omega}$ be a
bi-holomorphic map. Then $$\bSB_{\Omega}(z,w)=J_{\mathbb
C}\varphi(z)\overline{J_{\mathbb
C}\varphi(w)}\bSB_{\widetilde{\Omega}}(\varphi(z), \varphi(w))$$
for all $z, w \in \Omega,$ where $J_{\mathbb C}\varphi(w)$ is the
determinant of the derivative $D\varphi(w)$.
\end{lem}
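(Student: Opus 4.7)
The plan is to exploit the series representation \eqref{Bexpanorth} of the Bergman kernel together with the fact that a biholomorphism $\varphi\colon\Omega\to\widetilde{\Omega}$ induces a unitary between the corresponding Bergman spaces. Concretely, I would first recall the real change-of-variables formula: for a holomorphic map $\varphi$, the real Jacobian equals $|J_{\mathbb C}\varphi|^2$. Consequently, for any $f\in\mathbb A^2(\widetilde{\Omega})$, the pullback
$$U_\varphi f := J_{\mathbb C}\varphi\cdot(f\circ\varphi)$$
lies in $\mathbb A^2(\Omega)$ (it is holomorphic as the product of two holomorphic functions), and a direct application of the change-of-variables formula shows that $\|U_\varphi f\|_{\mathbb A^2(\Omega)}=\|f\|_{\mathbb A^2(\widetilde{\Omega})}$. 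Since $\varphi$ is a biholomorphism, the inverse $\varphi^{-1}$ produces the inverse operator, so $U_\varphi$ is unitary.

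Next, I would pick any orthonormal basis $\{\widetilde{\varphi}_k\}$ of $\mathbb A^2(\widetilde{\Omega})$. Because $U_\varphi$ is unitary, the family $\{U_\varphi\widetilde{\varphi}_k\}=\{J_{\mathbb C}\varphi\cdot(\widetilde{\varphi}_k\circ\varphi)\}$ is an orthonormal basis of $\mathbb A^2(\Omega)$. Plugging this basis into \eqref{Bexpanorth} gives
\begin{align*}
\bSB_{\Omega}(z,w) &= \sum_k \bigl(J_{\mathbb C}\varphi(z)\,\widetilde{\varphi}_k(\varphi(z))\bigr)\overline{\bigl(J_{\mathbb C}\varphi(w)\,\widetilde{\varphi}_k(\varphi(w))\bigr)}\\
&= J_{\mathbb C}\varphi(z)\,\overline{J_{\mathbb C}\varphi(w)}\,\sum_k \widetilde{\varphi}_k(\varphi(z))\,\overline{\widetilde{\varphi}_k(\varphi(w))},
\end{align*}
and recognizing the inner sum as $\bSB_{\widetilde{\Omega}}(\varphi(z),\varphi(w))$, again by \eqref{Bexpanorth}, yields the claimed identity.

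The only real technical point, and arguably the main obstacle, is verifying that $U_\varphi$ is indeed an isometry; this is where one uses $\det(D_{\mathbb R}\varphi)=|J_{\mathbb C}\varphi|^2$ for holomorphic $\varphi$ to match the $|J_{\mathbb C}\varphi|^2$ factor produced by $U_\varphi f\overline{U_\varphi f}$ with the Jacobian factor coming from the change-of-variables formula. Everything else is a formal manipulation of absolutely convergent series (the convergence is uniform on compact subsets of $\Omega\times\Omega$ as noted right after \eqref{Bexpanorth}), so interchange of sum and the pointwise identifications presents no difficulty.
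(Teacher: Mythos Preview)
Your proof is correct and follows essentially the same route as the paper: transport an orthonormal basis of $\mathbb A^2(\widetilde{\Omega})$ to one of $\mathbb A^2(\Omega)$ via the unitary $f\mapsto J_{\mathbb C}\varphi\cdot(f\circ\varphi)$, then plug into the series expansion \eqref{Bexpanorth} and factor out the Jacobian terms. The paper's argument is identical, only phrased slightly more tersely (it simply asserts that the change-of-variable formula makes the transported family orthonormal, without isolating the operator $U_\varphi$ or the identity $\det D_{\mathbb R}\varphi=|J_{\mathbb C}\varphi|^2$).
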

\begin{proof}
Suppose  $\{\tilde{\phi}_n\}$ is an orthonormal basis for $\mathbb
A^{2}(\tilde{\Omega}).$ By the change of variable formula, it follows
easily  that $\{\phi_n\}=\{J_{\mathbb C}\varphi(w)\tilde{\phi}_n\circ
\varphi\}, $ is an orthonormal basis for  $\mathbb
A^{2}(\Omega).$ Hence,
\begin{align*}
\boldsymbol
B_{\Omega}(z,w)=\sum_{n=0}^{\infty}\phi_n(z)\overline{\phi_n(w)}&=\sum_{n=0}^{\infty}J_{\mathbb
C}\varphi(z)(\tilde{\phi}_n\circ \varphi)(z)\overline{J_{\mathbb
C}\varphi(w)(\tilde{\phi}_n\circ \varphi)(w)}\\&=J_{\mathbb
C}\varphi(z)\overline{J_{\mathbb
C}\varphi(w)}\sum_{n=0}^{\infty}\tilde{\phi}_n(\varphi(z))\overline{\tilde{\phi}_n(\varphi(w))}\\&=
J_{\mathbb C}\varphi(z)\overline{J_{\mathbb
C}\varphi(w)}\boldsymbol B_{\widetilde{\Omega}}(\varphi(z),
\varphi(w))
\end{align*} completing our proof.
\end{proof}
If $\Omega$ is a domain in $\mathbb C^m$ and the bi-holomorphic
automorphism group ${\rm Aut}(\Omega)$ is transitive, then we can
determine the Bergman kernel as well as its curvature from its
value at $0.$ A domain with this property is called homogeneous.
For instance, the unit ball $\mathcal D$ in the linear space of
$r\times s$ matrices is homogeneous. If $\Omega$ is homogeneous,
then for any $w\in \Omega$,
 there exists a bi-holomorphic automorphism $\varphi_w$ with the property $\varphi_w(w) = 0$.
 The following Corollary is an immediate consequence of Lemma \ref{lemm:main2}.

\begin{cor}\label{corolo1}For any homogeneous domain $\Omega$ in $\mathbb C^m$, we have
$$\bSB_{\Omega}(w,w)=J_{\mathbb
C}\varphi_{w}(w)\overline{J_{\mathbb
C}\varphi_{w}(w)}\bSB_{\Omega}(0, 0), \,\,  w \in \Omega.$$
\end{cor}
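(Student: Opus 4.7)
The plan is to deduce the corollary as a direct specialization of Lemma \ref{lemm:main2}. Since $\Omega$ is homogeneous, for each fixed $w \in \Omega$ the bi-holomorphic automorphism $\varphi_w : \Omega \to \Omega$ with $\varphi_w(w) = 0$ is available by hypothesis. Taking $\tilde{\Omega} = \Omega$ and $\varphi = \varphi_w$ in the statement of Lemma \ref{lemm:main2} yields
$$\bSB_{\Omega}(z,w) = J_{\mathbb C}\varphi_w(z)\, \overline{J_{\mathbb C}\varphi_w(w)}\, \bSB_{\Omega}\bigl(\varphi_w(z), \varphi_w(w)\bigr), \quad z, w \in \Omega.$$
Specialising to the diagonal by setting $z = w$ collapses both arguments on the right-hand side to $0$, and the claimed identity follows at once.

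There is essentially no obstacle to overcome: all the analytic content has already been absorbed in the change-of-variable argument that proves Lemma \ref{lemm:main2}, and the homogeneity hypothesis enters solely to guarantee the existence of the automorphism $\varphi_w$. One may additionally remark that the right-hand side equals $|J_{\mathbb C}\varphi_w(w)|^2\, \bSB_{\Omega}(0,0)$, which is a positive real number, in agreement with the general fact that $\bSB_{\Omega}(w,w) > 0$ for all $w \in \Omega$.
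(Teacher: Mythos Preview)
Your proof is correct and is exactly the argument the paper intends: it states that the corollary is ``an immediate consequence of Lemma \ref{lemm:main2},'' and your specialization $\tilde{\Omega}=\Omega$, $\varphi=\varphi_w$, $z=w$ is precisely that consequence.
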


We recall from \cite[Theorem 2]{harris}  that for $Z, W$ in
the matrix ball $\mathcal D$ (of size $r\times s$) and $u\in
\mathbb C^{r\times s}$, we have $$D\varphi_W(Z) \cdot u =  (I-W
W^*)^{\frac{1}{2}}(I-ZW^*)^{-1} u (I-
W^*Z)^{-1}(I-W^*W)^{\frac{1}{2}}.$$ In particular, $D\varphi_W(W)
\cdot u =  (I-WW^*)^{-\frac{1}{2}} u (I-W^* W)^{-\frac{1}{2}}.$
Thus $D\varphi_W(W) = (I-WW^*)^{-\frac{1}{2}} \otimes (I-W^*
W)^{-\frac{1}{2}}.$ We therefore (cf. \cite[exercise 8]{halmos},
\cite{hua}) have
\begin{align*} \det D\varphi_W(W) &=
\big (\det (I-WW^*)^{-\frac{1}{2}} \big )^s \big (\det (I-W^* W)^{-\frac{1}{2}} \big )^r\\
&=\big (\det (I-WW^*)^{-\frac{1}{2}} \big )^{r+s}.
\end{align*}
It then follows that
\begin{align*}
J_{\mathbb C}\varphi_{W}(W)\overline{J_{\mathbb C}\varphi_{W}(W)}
&= \det (I - WW^*)^{-(r+s)},\,\, W\in \mathcal D.
\end{align*}
With a suitable normalization of the volume measure, we may assume
that $\bSB_\mathcal D(0,0) = 1$. With this normalization, we have
\begin{equation} \label{BergKern}
\bSB_{\mathcal D}(W,W) = \det (I - WW^*)^{-(r+s)},\,\, W\in
\mathcal D.
\end{equation}

The Bergman kernel $\bSB_{\Omega},$ where $\Omega=\{(z_1, z_2):
|z_2|
 \leq (1-|z_1|^2)\}
\subset \mathbb C^2$ is known (cf. \cite[Example 6.1.6]{pflug}):
\begin{equation}\label{BergNil}
\bSB_\Omega(z, w)=\frac{3(1-z_1\bar w_1)^2+z_2\bar
w_2}{\{(1-z_1\bar w_1)^2-z_2\bar w_2\}^3},\,\,z,w\in \Omega.
\end{equation}
The domain $\Omega$ is not homogeneous. However, it is a Reinhadt
domain. Consequently, an orthonormal basis consisting of monomials
exists in the Bergman space of this domain. We give a very similar
example below to show that computing the Bergman kernel in a
closed form may not be easy even for very simple Reinhadt domains.
We take $\Omega$ to be the domain
 $$\{(z_1, z_2, z_3):
|z_2|^2
 \leq (1-|z_1|^2)(1-|z_3|^2), 1-|z_3|^2\geq 0\}
\subset \mathbb C^3.$$
\begin{lem}\label{bergman}The Bergman kernel $\bSB_{\Omega}(z,w)$ for the domain $\Omega$ is given by the formula
$$\sum_{p, m, n=0}^{\infty}\frac{m+1}{4\beta(n+1, m+2)\beta(p+1,
m+2)}(z_1\bar{w}_1)^n(z_2\bar{w}_2)^m(z_3\bar{w}_3)^p,$$ where
$\beta(m,n)$ is the Beta function.
\end{lem}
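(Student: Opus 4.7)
The plan is to exploit the Reinhardt structure of $\Omega.$ Since $\Omega$ is invariant under the torus action $(z_1,z_2,z_3)\mapsto(e^{i\theta_1}z_1,e^{i\theta_2}z_2,e^{i\theta_3}z_3),$ the monomials $\{z_1^nz_2^mz_3^p:n,m,p\geq 0\}$ are pairwise orthogonal in $\mathbb A^2(\Omega),$ and standard arguments (using that $\Omega$ is complete Reinhardt and contains the origin) show that they form an orthogonal basis. Then by \eqref{Bexpanorth} it suffices to compute the norms
$$\|z_1^nz_2^mz_3^p\|^2_{\mathbb A^2(\Omega)} = \int_\Omega |z_1|^{2n}|z_2|^{2m}|z_3|^{2p}\,dV,$$
and verify that, with a suitable normalization of the volume, this equals $\tfrac{4\beta(n+1,m+2)\beta(p+1,m+2)}{m+1}.$

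For the computation, I would pass to polar coordinates $z_j=r_je^{i\theta_j},$ pick up a factor of $(2\pi)^3$ from the angular integrals, and reduce to
$$(2\pi)^3\int_0^1\!\!\int_0^1\!\!\int_0^{\sqrt{(1-r_1^2)(1-r_3^2)}} r_1^{2n+1}r_2^{2m+1}r_3^{2p+1}\,dr_2\,dr_1\,dr_3.$$
The inner integral in $r_2,$ after the substitution $u=r_2^2,$ produces $\tfrac{1}{2(m+1)}((1-r_1^2)(1-r_3^2))^{m+1}.$ The outer integrals in $r_1$ and $r_3$ then separate, and the substitutions $t=r_1^2,$ $s=r_3^2$ reduce each to a Beta integral, yielding $\tfrac{1}{2}\beta(n+1,m+2)$ and $\tfrac{1}{2}\beta(p+1,m+2)$ respectively.

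Collecting the factors, one obtains
$$\|z_1^nz_2^mz_3^p\|^2_{\mathbb A^2(\Omega)} = \frac{(2\pi)^3}{8(m+1)}\beta(n+1,m+2)\beta(p+1,m+2),$$
and after absorbing the constant $(2\pi)^3/8$ into the normalization of the Lebesgue measure (so that the prefactor becomes $\tfrac{1}{4}$, matching the statement), the expansion \eqref{Bexpanorth} gives the claimed formula. There is essentially no obstacle beyond careful bookkeeping; the one point that deserves comment is the justification that the monomials exhaust $\mathbb A^2(\Omega),$ which follows from the fact that every holomorphic $f$ on the Reinhardt domain $\Omega$ admits a Laurent expansion around $0$ with only non-negative exponents (since $0\in\Omega$ and $\Omega$ is connected), convergent in $\mathbb A^2(\Omega)$ to $f.$
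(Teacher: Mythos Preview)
Your approach is essentially identical to the paper's: both use the Reinhardt structure to justify the monomial orthogonal basis, pass to polar coordinates, integrate out $r_2$ first, and reduce the remaining radial integrals to Beta functions via the substitutions $s_j=r_j^2$. The only slip is in your final normalization: you propose to absorb $(2\pi)^3/8=\pi^3$, but the paper normalizes so that $\bSB_\Omega(0,0)=1$, i.e.\ $\|1\|^2=1$, which amounts to dividing by $\pi^3\beta(1,2)^2=\pi^3/4$ rather than $\pi^3$; this is what produces the factor $4$ in the statement.
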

\begin{proof}
Let $\{(z_1)^n(z_2)^m(z_3)^p\}_{n, m, p=1}^{\infty}$ be the
orthonormal basis for the Bergman space $\mathbb A^2(\Omega).$
Now, for the standard measure on $\Omega,$ {\small
\begin{align}\label{eqann:main1}\|(z_1)^n(z_2)^m(z_3)^p\|^2\nonumber&
=\int_{0}^{2\pi}d\theta_1 d\theta_2
d\theta_3\int_{0}^{1}r_{1}^{(2n+1)}dr_1
\int_{0}^{1}r_{3}^{(2p+1)}dr_3\int_{0}^{\sqrt{(1-r_{1}^{2})(1-r_{3}^{2})}}r_{2}^{(2m+1)}dr_2\\\nonumber&=
8\pi^3\int_{0}^{1}r_{1}^{(2n+1)}dr_1
\int_{0}^{1}r_{3}^{(2p+1)}dr_3\frac{(1-r_{1}^{2})^{(m+1)}(1-r_{3}^{2})^{(m+1)}}{2m+2}\\&=\frac{\pi^3}{m+1}
\int_{0}^{1}s_1^{n}(1-s_1)^{(m+1)}ds_1\int_{0}^{1}s_2^{p}(1-s_2)^{(m+1)}ds_2,
\end{align}}where $r_1^{2}=s_1$ and $r_2^{2}=s_2.$
Since $\beta(n, m)=\int_{0}^{1}r^{(n-1)}(1-r)^{(m-1)}dr,$
equation (\ref{eqann:main1}) is equal to{
\begin{align}\|(z_1)^n(z_2)^m(z_3)^p\|^2\nonumber&=\frac{\pi^3}{m+1}\beta(n+1, m+2)\beta(p+1, m+2).
\end{align}}\noindent
From equation (\ref{eqann:main1}), it follows that
$\|1\|^2 = \pi^3\beta(1, 2)\beta(1, 2)=\frac{\pi^3}{4}$.
We normalize the volume measure as discussed above to ensure
{
\begin{align}\|(z_1)^n(z_2)^m(z_3)^p\|^2\nonumber&=\frac{4}{m+1}\beta(n+1, m+2)\beta(p+1,
m+2).\end{align}}\noindent Having computed an orthonormal basis for the
Bergman space, we can complete the computation of the Bergman
kernel using the infinite expansion \eqref{Bexpanorth}.
\end{proof}


The following lemma is a change of variable formula (cf. \cite[1.3.3]{rudin}).
\begin{lem}\label{chain rule}
Suppose $\Omega$ is in $\mathbb C^m, F=(f_1, \ldots, f_n)$ maps
$\Omega$ holomorphically into $\mathbb C^n, g$ maps the range of $F$ into $\mathbb
C,$ and $f_1, \ldots, f_n, g$ are of class $\mathcal C^2.$ If
$$h=g\circ F=g(f_1, \ldots, f_n)$$ then, for $1 \leq i,j\leq m$ and
$z \in \Omega,$ $$\big(\overline{D}_jD_ih
\big)(z)=\sum_{k=1}^{n}\sum_{l=1}^{n}\big(\overline{D}_lD_k g
\big)(w)\overline{D_jf_l}(z)D_if_k(z),$$ where
$\overline{D_jf_l}(z)=(\overline{D}_j\bar{f}_l)(z)$ and $w=(f_1(z), \ldots , f_m(z)).$
\end{lem}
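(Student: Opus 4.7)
The plan is to apply the Wirtinger chain rule twice and use the holomorphicity of $F$ to eliminate unwanted terms. Recall that in Wirtinger calculus, for a $\mathcal C^1$ map, the chain rule takes the general form
$$D_i(g\circ F)(z)=\sum_{k=1}^n (D_k g)(F(z))\, D_i f_k(z) + \sum_{l=1}^n (\overline{D}_l g)(F(z))\, D_i\bar{f}_l(z),$$
and similarly with $\overline{D}_j$ in place of $D_i$. Since each $f_k$ is holomorphic on $\Omega$, we have $\overline{D}_i f_k\equiv 0$, and consequently $D_i\bar{f}_l = \overline{\overline{D}_i f_l}\equiv 0$ as well.

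First I would compute $D_i h$. The second sum above drops out by the holomorphicity of $F$, leaving
$$(D_i h)(z)=\sum_{k=1}^{n}(D_k g)(F(z))\, D_i f_k(z).$$
Next I would apply $\overline{D}_j$ to both sides. Using the product rule, and noting that $\overline{D}_j(D_i f_k)=D_i(\overline{D}_j f_k)=0$ since $f_k$ is holomorphic, only the term where $\overline{D}_j$ hits $(D_k g)\circ F$ survives:
$$(\overline{D}_j D_i h)(z)=\sum_{k=1}^{n}\overline{D}_j\!\bigl[(D_k g)(F(z))\bigr]\, D_i f_k(z).$$

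Applying the Wirtinger chain rule a second time to the inner expression gives
$$\overline{D}_j\!\bigl[(D_k g)(F(z))\bigr]=\sum_{l=1}^{n}(D_l D_k g)(F(z))\,\overline{D}_j f_l(z)+\sum_{l=1}^{n}(\overline{D}_l D_k g)(F(z))\,\overline{D}_j\bar{f}_l(z).$$
The first sum vanishes again by holomorphicity of $F$, and in the second sum $\overline{D}_j\bar{f}_l(z)=\overline{D_j f_l(z)}$. Substituting back yields
$$(\overline{D}_j D_i h)(z)=\sum_{k=1}^{n}\sum_{l=1}^{n}(\overline{D}_l D_k g)(w)\,\overline{D_j f_l(z)}\, D_i f_k(z),$$
with $w=F(z)$, which is the claimed identity.

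The only mild subtlety, and thus the only place where one must be careful, is the interchange $\overline{D}_j D_i f_k=D_i \overline{D}_j f_k$ needed to kill the second-derivative term on $f_k$; this is legitimate because $f_k\in\mathcal C^2$, so mixed partials commute and $\overline{D}_j f_k\equiv 0$ implies $D_i\overline{D}_j f_k\equiv 0$. No genuine obstacle is expected; the result is a straightforward bookkeeping exercise in the Wirtinger calculus once the vanishing statements $\overline{D}_j f_k\equiv 0$ and $D_i\bar{f}_l\equiv 0$ are systematically applied.
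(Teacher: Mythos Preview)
Your argument is correct: two applications of the Wirtinger chain rule, with the holomorphicity of $F$ killing all the unwanted terms, give exactly the stated identity, and you are right to flag the $\mathcal C^2$ hypothesis as what justifies commuting $\overline D_j$ and $D_i$ on $f_k$.

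As for comparison with the paper: there is nothing to compare. The paper does not prove this lemma at all; it simply records it as a known change-of-variable formula and cites Rudin's \emph{Function Theory in the Unit Ball of $\mathbb C^n$}, \S1.3.3. Your write-up is essentially the standard computation one finds there, so you have supplied what the paper chose to outsource.
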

This lemma helps us determine the transformation rule for the Bergman metric as follows (cf. \cite[proposition 1.4.12]{krantz}).

\begin{prop}\label{lemm:main3}
Let $\Omega$ and  $\tilde{\Omega}$ be two domains in $\mathbb C^m$
and $\varphi:\Omega\rightarrow \tilde{\Omega}$ be a bi-holomorphic
map. Then $$\mathcal K_{\bSB_{\Omega}}(w)= {\big (D\varphi\big
)(w)}^{\rm t}\mathcal
K_{\bSB_{\widetilde{\Omega}}}(\varphi(w))\overline{\big
(D\varphi\big )(w)}, w \in \Omega,$$
where $\mathcal K_{\bSB_{\Omega}}(w):=-\big (\!\!\big (\tfrac{\partial^2}{\partial w_i \partial
\bar{w}_j}\log\bSB_{{\Omega}}(w,
w) \big)\!\!\big ).$
\end{prop}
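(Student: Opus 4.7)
The plan is to combine the transformation rule for the Bergman kernel given by Lemma \ref{lemm:main2} with the chain rule recorded in Lemma \ref{chain rule}. Evaluating the identity of Lemma \ref{lemm:main2} on the diagonal $z=w$ yields
$$
\bSB_\Omega(w,w)\;=\;J_{\mathbb C}\varphi(w)\,\overline{J_{\mathbb C}\varphi(w)}\,\bSB_{\widetilde\Omega}(\varphi(w),\varphi(w)),
$$
so that, taking logarithms,
$$
\log \bSB_\Omega(w,w)\;=\;\log J_{\mathbb C}\varphi(w)\;+\;\overline{\log J_{\mathbb C}\varphi(w)}\;+\;\log \bSB_{\widetilde\Omega}(\varphi(w),\varphi(w)).
$$

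The first two terms are holomorphic and anti-holomorphic respectively, hence together they are pluriharmonic; in particular the mixed operator $\partial_i\bar\partial_j$ annihilates them. Therefore everything reduces to computing $\partial_i\bar\partial_j$ of the pulled-back function $h(w):=\log\bSB_{\widetilde\Omega}(\varphi(w),\varphi(w))$.

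Next I would apply Lemma \ref{chain rule} with $F=\varphi=(f_1,\ldots,f_m)$ and $g(\zeta)=\log \bSB_{\widetilde\Omega}(\zeta,\zeta)$, regarded as a real-analytic $\mathcal C^2$ function on $\widetilde\Omega$. This is legitimate because $\varphi$ is holomorphic (so Lemma \ref{chain rule} applies), while $g$ need not be. The lemma gives
$$
\big(\overline{D}_j D_i h\big)(w)\;=\;\sum_{k,l=1}^m\big(\overline{D}_l D_k g\big)(\varphi(w))\;D_if_k(w)\;\overline{D_jf_l(w)}.
$$
Rewriting $D_if_k=(D\varphi)^{\mathrm t}_{ik}$ and $\overline{D_jf_l}=\overline{D\varphi}_{lj}$, this is precisely the $(i,j)$ entry of the matrix product
$$
(D\varphi)(w)^{\mathrm t}\,\Big(\!\!\Big(\big(\overline{D}_l D_k g\big)(\varphi(w))\Big)\!\!\Big)_{k,l=1}^m\,\overline{(D\varphi)(w)}.
$$

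Finally, by definition the inner matrix equals $-\mathcal K_{\bSB_{\widetilde\Omega}}(\varphi(w))$, and the outer matrix equals $-\mathcal K_{\bSB_\Omega}(w)$ in view of the vanishing of the pluriharmonic contribution above. Absorbing the signs yields the claimed identity. The only real work is bookkeeping with indices and transposes in this last repackaging; I do not anticipate any genuine obstacle, since once Lemmas \ref{lemm:main2} and \ref{chain rule} are in hand the argument is purely mechanical.
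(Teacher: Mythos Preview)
Your proposal is correct and follows essentially the same route as the paper: both arguments take logarithms in the identity of Lemma~\ref{lemm:main2}, kill the Jacobian term by pluriharmonicity, and then apply the chain rule of Lemma~\ref{chain rule} with $g=\log\bSB_{\widetilde\Omega}$ and $F=\varphi$ to obtain the matrix factorization. The only cosmetic difference is that the paper writes the Jacobian contribution as $\log|J_{\mathbb C}\varphi|^2$ rather than splitting it into holomorphic and anti-holomorphic logarithms.
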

\begin{proof}
For any holomorphic function $\varphi$ defined on $\Omega$, we
have $\frac{\partial}{\partial w_i \partial \bar{w}_j}
\log|J_{\mathbb C}\varphi(w)|^{2}=0.$ Combining this with Lemma
\ref{lemm:main2}, we get
\begin{align*}
\frac{\partial^2}{\partial w_i \partial
\bar{w}_j}\log\bSB_{\widetilde{\Omega}}(\varphi(w),
\varphi(w))&=\frac{\partial^2}{\partial w_i \partial
\bar{w}_j}\log|J_{\mathbb
C}\varphi(w)|^{-2}\bSB_{\Omega}(w,w)\\&=-\frac{\partial^2}{\partial
w_i \partial \bar{w}_j}\log|J_{\mathbb
C}\varphi(w)|^{2}+\frac{\partial^2}{\partial w_i \partial
\bar{w}_j}\bSB_{\Omega}(w,w)\\&=\frac{\partial^2}{\partial w_i
\partial \bar{w}_j}\bSB_{\Omega}(w,w).
\end{align*} Also by Lemma \ref{chain rule} with $g(z)=\log\bSB_{\widetilde{\Omega}}(z, z)$ and $F=f$
we have, $$\frac{\partial^2}{\partial w_i \partial
\bar{w}_j}\log\bSB_{\widetilde{\Omega}}(\varphi(w),
\varphi(w))=\sum_{k, l=1}^{n}\frac{\partial\varphi_{k} }{\partial
w_i}(w)\Big (\frac{\partial^2}{\partial z_k \partial
\bar{z}_l}\log\bSB_{\widetilde{\Omega}}\Big )(\varphi(w),
\varphi(w))\frac{\partial\varphi_{l} }{\partial w_j}(w).$$ Hence
\begin{eqnarray*}
\lefteqn{ \left(\!\!\left ( \Big(\frac{\partial^2}{\partial w_i \partial
\bar{w}_j}\log\bSB_{\widetilde{\Omega}}\Big )(\varphi(w),
\varphi(w)\right)\!\!\right)_{i,j}}\\
&\phantom{~~~~~~~~~~~~~~~~~~~~}=&\left(\!\!\left(\frac{\partial\varphi_{k}
}{\partial w_i}(w)\right)\!\!\right)_{i,k}\left(\!\!\left
( \Big (\frac{\partial^2}{\partial z_k \partial
\bar{z}_l}\log\bSB_{\widetilde{\Omega}} \Big )(\varphi(w),
\varphi(w))\right)\!\!\right)_{k,l}\left(\!\!\left(\frac{\partial\varphi_{l}
}{\partial w_j}(w)\right)\!\!\right)_{l,j} \\
&\phantom{~~~~~~~~~~~~~~~~~~~~}=&{\big (D\varphi\big )(w)}^{\rm
t}\mathcal K_{\bSB_{\widetilde{\Omega}}}(\varphi(w))\overline{\big
(D\varphi\big )(w)}.
\end{eqnarray*}
Therefore we have the desired transformation rule for the Bergman
metric, namely,
$$\mathcal K_{\bSB_{\Omega}}(w)= {\big (D\varphi\big
)(w)}^{\rm t}\mathcal
K_{\bSB_{\widetilde{\Omega}}}(\varphi(w))\overline{\big
(D\varphi\big )(w)},\,\, w \in \Omega.$$
\end{proof}
As a consequence of this transformation rule, a formula for the Bergman
metric at an arbitrary $w$ in $\Omega$ is obtained from its value
at $0$. The proof follows from the transitivity of the
automorphism group.

\begin{cor}\label{transK} For a homogeneous domain $\Omega$, pick a  a  bi-holomorphic  automorphism
$\varphi_w$ of $\Omega$ with $\varphi_w(w) = 0$, $w\in \Omega.$ We
have
$$\mathcal K_{\bSB_{\Omega}}(w)=
\big(D \varphi_{w}(w)\big)^{\rm t}\mathcal
K_{\bSB_{\Omega}}(0)\overline{D\varphi_{w}(w)}$$ for all $ w \in
\Omega.$
\end{cor}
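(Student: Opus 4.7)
The plan is to apply Proposition \ref{lemm:main3} directly, taking the source and target domains to coincide. Since $\Omega$ is homogeneous, for each $w\in\Omega$ we are given a bi-holomorphic self-map $\varphi_w : \Omega \to \Omega$ with $\varphi_w(w)=0$, so the hypotheses of Proposition \ref{lemm:main3} are satisfied with $\tilde\Omega = \Omega$ and $\varphi = \varphi_w$.

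Specializing the transformation rule
$$\mathcal K_{\bSB_{\Omega}}(w)= {\big (D\varphi\big)(w)}^{\rm t}\mathcal K_{\bSB_{\widetilde{\Omega}}}(\varphi(w))\overline{\big (D\varphi\big)(w)}$$
to this situation, the point $\varphi(w)$ becomes $\varphi_w(w)=0$, and $\mathcal K_{\bSB_{\widetilde\Omega}}(\varphi(w))$ becomes $\mathcal K_{\bSB_{\Omega}}(0)$. Substituting yields exactly the claimed identity
$$\mathcal K_{\bSB_{\Omega}}(w)=\big(D \varphi_{w}(w)\big)^{\rm t}\mathcal K_{\bSB_{\Omega}}(0)\overline{D\varphi_{w}(w)},\quad w\in\Omega.$$

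There is essentially no obstacle: the whole content is in Proposition \ref{lemm:main3}, and homogeneity of $\Omega$ is used only to guarantee the existence of the automorphism $\varphi_w$ sending $w$ to the base point $0$. The proof is a one-line specialization, so the write-up can simply invoke Proposition \ref{lemm:main3} with $\varphi=\varphi_w$ and note that $\varphi_w(w)=0$.
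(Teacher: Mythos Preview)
Your proof is correct and is essentially the same as the paper's: the corollary is stated as an immediate consequence of Proposition~\ref{lemm:main3}, with the only additional input being the transitivity of the automorphism group (i.e., the existence of $\varphi_w$ taking $w$ to $0$), which is exactly how you argue.
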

For the matrix ball $\mathcal D$, as is well-known (cf. \cite{FK}), $\bSB_\mathcal
D^\lambda$ is not necessarily positive definite for all $\lambda >
0$. However, as we have pointed out before, the space $\mathcal
N^{(\lambda)}(w)$ has a natural inner product induced by
$\bSB_\mathcal D^\lambda$. Thus we explore properties of
$\bSB_\mathcal D^\lambda$ for all $\lambda >0$. In what follows,
we will repeatedly use the transformation rule for
$\bSB_\Omega^\lambda$ which is an immediate consequence of the
transformation rule for $\bSB_\Omega,$ namely,
\begin{equation}\label{transK^l}
\mathcal K_{\bSB^{\lambda}_{\Omega}}(w)=\lambda\mathcal
K_{\bSB_{\Omega}}(w)=\lambda{D\varphi_{w}(w)}^{\rm t}\mathcal
K_{\bSB_{\Omega}}(0)\overline{D\varphi_{w}(w)}
\end{equation} for $w \in \Omega$ and
$\lambda >0$.

To compute the Bergman metric, we begin with a Lemma on the Taylor
expansion of the determinant. To facilitate its proof, for $Z$  in
$\mathcal M_{rs}(\mathbb C),$ we write  $Z=\left (
\begin{smallmatrix} Z_1 \\ \vdots\\ Z_r
\end{smallmatrix}\right ),$ with $Z_i=\left(z_{i1}, \ldots,
z_{is}\right),$   $i=1 , \ldots, r.$ In this notation,
$$I-ZZ^*=\left (
\begin{smallmatrix} 1-\|Z_1\|^2 & -\langle Z_1, Z_2\rangle &
\cdots
 & -\langle Z_1, Z_r\rangle  \\
 \vdots & \vdots & \vdots & \vdots\\
 -\langle Z_r, Z_1\rangle &  -\langle Z_r, Z_2\rangle & \cdots & 1-\|Z_r\|^2
\end{smallmatrix}\right ),$$ where
$\|Z_i\|^2=\sum_{j=1}^{s}|z_{ij}|^2, \langle Z_i, Z_j\rangle
=\sum_{k=1}^{s}z_{ik}\bar{z}_{jk}.$ Set $X_{ij}=\langle Z_i,
Z_j\rangle, 1\leq i, j \leq r.$

The curvature $\mathcal K_{\boldsymbol B_{\mathcal D}}(0)$ of the
Bergman kernel, which is often called the Bergman metric, is
easily seen to be $p$ times the $rs \times rs$ identity as a
consequence of the following lemma, where $p=r+s.$ The value of the curvature
$\mathcal K_{\boldsymbol B_{\mathcal D}}(W)$ at an arbitrary point
$W$ is then easy to write down using the homogeneity of the unit
ball $\mathcal D$.

\begin{lem}\label{lem:det}
$\det (I-ZZ^*)=1-\sum_{i=1}^{r}\|Z_i\|^2+ P(X),$
where $P(X)=\sum_{|{\ell}|\geq2}p_{\ell}X^{\ell}$ with
$$X^{\ell}:=X_{11}^{\ell_{11}}\ldots X_{1r}^{\ell_{1r}}\ldots
X_{r1}^{\ell_{r1}}\ldots X_{rr}^{\ell_{rr}}.$$
\end{lem}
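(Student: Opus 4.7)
My plan is to expand the determinant directly via the Leibniz formula and track the terms through degree one in the variables $X_{ij} = \langle Z_i, Z_j\rangle$. Writing $M := I - ZZ^*$, the $(i,j)$ entry of $M$ is $\delta_{ij} - X_{ij}$. The Leibniz formula gives
\begin{equation*}
\det(I-ZZ^*) = \sum_{\sigma\in S_r} \operatorname{sgn}(\sigma)\prod_{i=1}^{r}\big(\delta_{i\,\sigma(i)} - X_{i\,\sigma(i)}\big).
\end{equation*}
The key observation is that for any permutation $\sigma\neq \mathrm{id}$, the set of indices where $\sigma(i)\neq i$ has cardinality at least $2$ (a single transposition is the smallest possible non-identity derangement of a subset). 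For such indices, $\delta_{i\,\sigma(i)}=0$, so the only contribution from that factor is $-X_{i\,\sigma(i)}$. Consequently, the product for any non-identity $\sigma$ is a polynomial of total degree $\geq 2$ in the variables $\{X_{ij}\}$, and hence is absorbed into the remainder $P(X)$.

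It therefore remains to analyze the identity contribution, which is simply
\begin{equation*}
\prod_{i=1}^{r}(1 - X_{ii}) = 1 - \sum_{i=1}^{r} X_{ii} + \sum_{|\ell|\geq 2}(-1)^{|\ell|}\prod_i X_{ii}^{\ell_i}.
\end{equation*}
Since $X_{ii} = \langle Z_i, Z_i\rangle = \|Z_i\|^2$, this accounts for the constant term $1$, the linear part $-\sum_{i=1}^{r}\|Z_i\|^2$, and a further polynomial of degree $\geq 2$ which is likewise absorbed into $P(X)$.

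Combining the two contributions, every term of degree $\geq 2$ in the $X_{ij}$'s is collected into a single polynomial $P(X) = \sum_{|\ell|\geq 2} p_\ell X^\ell$ in the monomials $X^{\ell}=X_{11}^{\ell_{11}}\cdots X_{rr}^{\ell_{rr}}$, yielding the claimed expansion. There is essentially no technical obstacle here; the only thing to verify carefully is the combinatorial claim that a non-identity permutation displaces at least two indices, which follows from the elementary fact that if $\sigma$ fixes all but one index, then $\sigma$ would fail to be a bijection on the remaining singleton. Thus the proof reduces to bookkeeping in the Leibniz expansion.
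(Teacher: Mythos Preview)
Your proof is correct and takes a genuinely different, more direct route than the paper's. The paper argues by induction on $r$: it splits $I-ZZ^*$ into $2\times 2$ blocks (peeling off the last row and column), uses the identity $\det(I-ZZ^*) = A_{22}\det A_{11} - A_{21}\,\mathrm{Adj}(A_{11})\,A_{12}$, applies the induction hypothesis to $\det A_{11}$, and then checks that the adjugate correction contributes only monomials $X^\ell$ with $|\ell|\geq 2$. Your Leibniz-formula argument bypasses the induction entirely: the elementary fact that a non-identity permutation displaces at least two indices forces every non-diagonal summand to be of degree $\geq 2$ in the $X_{ij}$, and the identity permutation is handled by expanding $\prod_i(1-X_{ii})$. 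Your approach is shorter and conceptually cleaner for what the lemma actually asserts; the paper's inductive block decomposition, by contrast, produces an explicit recursive description of $P(X)$, which could be useful if one wanted finer information about the higher-order terms. One small quibble: in your display for $\prod_i(1-X_{ii})$ the exponents $\ell_i$ should range only over $\{0,1\}$ (each factor is linear), but this does not affect the argument since you only use that the remaining terms have total degree at least $2$.
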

\begin{proof}
The proof is by induction on $r$.  For $r=1$ we have $\det
(I-ZZ^*)=1-\|Z\|^2.$ Therefore in this case , $P=0$ and we are
done. For $r=2,$ we have
$$\det (I-ZZ^*)=\det\left (
\begin{smallmatrix} 1-\|Z_1\|^2 & -\langle Z_1,
Z_2\rangle \\
 -\langle Z_2, Z_1\rangle  & 1-\|Z_2\|^2
\end{smallmatrix}\right ).$$ For $r=2,$  a
direct verification shows that the $\det (I-ZZ^*)$  is equal to
$1-\sum_{i=1}^{2}\|Z_i\|^2+ P(X),$ where
$P(X)=X_{11}X_{22}-|X_{12}|^2.$ The decomposition
$$I-ZZ^*= \left ( \begin{array}{cccc|c}
1-\|Z_1\|^2 & -\langle Z_1, Z_2\rangle & \cdots
 & -\langle Z_1, Z_{r-1}\rangle &-\langle Z_1,Z_r \rangle \\
 \vdots & \vdots & \vdots & \vdots & \vdots \\
 -\langle Z_{r-1}, Z_1\rangle &  -\langle Z_{r-1}, Z_2\rangle & \cdots & 1-\|Z_{r-1}\|^2 &
  - \langle Z_{r-1} , Z_r \rangle \\ \hline

-\langle Z_r,Z_1 \rangle &  -\langle Z_r,Z_2 \rangle & \cdots &
 - \langle Z_{r} , Z_{r-1} \rangle & 1- \|Z_r\|^2\\
\end{array} \right )$$
is crucial to our induction argument. Let $A_{ij}$, $i,j=1,2$,
denote the blocks in this decomposition.
 By induction hypothesis, we have
$$\det A_{11}=1-\sum_{i=2}^{r}\|Z_i\|^2+ Q(X),$$ where
$Q(X)=\sum_{|\ell|\geq2}q_{\ell}X^{\ell}.$  Since  $\det
(A_{22}-A_{21}A_{11}^{-1}A_{12})$ is a scalar, it follows that
\begin{eqnarray*}
\det (I-ZZ^*) &=& (A_{22}-A_{21}A_{11}^{-1}A_{12})\,\det A_{11}\\
&=& A_{22} \det A_{11} - A_{21}\big (\det A_{11}\big ) A_{11}^{-1} A_{12}\\
&=&  A_{22} \det A_{11} - A_{21}\big ( {\rm Adj} (A_{11}) \big
)A_{12},
\end{eqnarray*}
where, as usual, ${\rm Adj}(A_{11})$ denotes the transpose of the
matrix of co-factors of $A_{11}$. Clearly, $A_{21}\big ( {\rm Adj}
(A_{11}) \big )A_{12}$ is a sum of $(r-1)^2$ terms. Each of these
is of the form $ X_{k 1} a_{j k} X_{1 j}$, where $a_{jk}$ denotes
the $(j,k)$ entry of ${\rm Adj}(A_{11})$. It follows that any one
term in the sum $A_{21}\big ( {\rm Adj} (A_{11}) \big )A_{12}$ is
some constant multiple of $X^{\ell}$ with $|\ell|\geq2.$
Furthermore, $$A_{22} \det A_{11} =
1-\sum_{i=1}^{r}\|Z_i\|^2+\|Z_r\|^2\sum_{i=1}^{r-1}\|Z_i\|^2+Q(X)(1-\|Z_r\|^2).$$
Putting these together, we see that   $$\det
(I-ZZ^*)=1-\sum_{i=1}^{r}\|Z_i\|^2+ P(X),$$ where
$P(X)=X_{rr}\sum_{i=1}^{r-1}X_{ii}+Q(X)(1-X_{rr})-A_{21} \big
({\rm Adj}(A_{11})\big ) A_{12}$ completing the proof.
\end{proof}

 Let $\mathcal K_{\boldsymbol B_{\mathcal D}}(Z)$ be the curvature (sometimes also called the Bergman metric) of
 the Bergman kernel $\boldsymbol B_{\mathcal D}(Z,Z).$
 Set $w_1=z_{11}, \ldots, w_{s}=z_{1s}, \ldots, w_{rs-s+1}=z_{r1},\ldots ,w_{rs}=z_{rs}.$ The formula for
 the Bergman metric given below is due to Koranyi
(cf. \cite{Adam}).

\begin{thm}\label{curva}
$\mathcal K_{\boldsymbol B_{\mathcal D}}(0)=pI,$ where $I$ is the
$rs \times rs$ identity matrix.
\end{thm}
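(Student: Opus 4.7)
The plan is to compute $\mathcal K_{\boldsymbol B_{\mathcal D}}(0)$ directly from the closed-form expression $\boldsymbol B_{\mathcal D}(W,W)=\det(I-WW^*)^{-p}$ established in \eqref{BergKern}, combined with the Taylor expansion of $\det(I-ZZ^*)$ that Lemma \ref{lem:det} provides. First I would write
$$\log \boldsymbol B_{\mathcal D}(W,W) = -p\,\log \det(I-WW^*),$$
so that the problem reduces to computing $\dfrac{\partial^{2}}{\partial w_{i}\partial \bar w_{j}}\log\det(I-WW^{*})$ at $W=0$ for every pair $(i,j)\in\{1,\dots,rs\}^{2}$, where the coordinates $w_{1},\ldots,w_{rs}$ are the flattened entries $z_{11},\dots,z_{rs}$.

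Next I would invoke Lemma \ref{lem:det} to write
$$\det(I-WW^{*}) \;=\; 1 - \sum_{k=1}^{r}\|Z_{k}\|^{2} + P(X),$$
with $P(X)=\sum_{|\ell|\geq 2}p_{\ell}X^{\ell}$ a polynomial in the entries $X_{ab}=\langle Z_{a},Z_{b}\rangle$. The key structural observation is that each $X_{ab}=\sum_{c}z_{ac}\bar z_{bc}$ is bilinear, being of degree one in the holomorphic variables and degree one in the antiholomorphic variables. Hence every monomial $X^{\ell}$ with $|\ell|\geq 2$ is of total degree at least two in $z$ \emph{and} at least two in $\bar z$, so
$$\frac{\partial^{2}}{\partial w_{i}\partial \bar w_{j}}X^{\ell}\Big|_{W=0}=0 \qquad (|\ell|\geq 2).$$

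I would then expand $\log(1+u)=u-\tfrac{1}{2}u^{2}+\cdots$ with $u=-\sum_{k,l}|z_{kl}|^{2}+P(X)$, which vanishes at $W=0$. Since $u$ itself is of combined degree at least two in $(z,\bar z)$, the products $u^{2},u^{3},\ldots$ are of total degree at least four and their $\partial_{w_{i}}\partial_{\bar w_{j}}$ derivatives vanish at the origin. Combining these facts with the previous paragraph,
$$\frac{\partial^{2}}{\partial w_{i}\partial \bar w_{j}}\log\det(I-WW^{*})\Big|_{W=0}
=\frac{\partial^{2}}{\partial w_{i}\partial \bar w_{j}}\Big(-\sum_{k,l}|z_{kl}|^{2}\Big)\Big|_{0}=-\delta_{ij}.$$
Multiplying by $-p$ and applying the definition of $\mathcal K_{\boldsymbol B_{\mathcal D}}$ yields $\mathcal K_{\boldsymbol B_{\mathcal D}}(0)=pI$, with $p=r+s$.

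The only genuine obstacle is the verification that $P(X)$ and the higher powers of $u$ do not contribute to the mixed Hessian at $0$; Lemma \ref{lem:det} is tailor-made for this since it packages all corrections into monomials in the bilinear quantities $X_{ab}$. Once one recognizes that each $X_{ab}$ has ``bidegree'' $(1,1)$ in $(z,\bar z)$, the vanishing of the higher-order contributions to $\partial_{w_{i}}\partial_{\bar w_{j}}(\,\cdot\,)|_{0}$ is immediate, and the remainder of the proof is bookkeeping in the identification between matrix entries $z_{kl}$ and the flat coordinates $w_{1},\ldots,w_{rs}$.
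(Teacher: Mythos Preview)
Your proposal is correct and follows essentially the same route as the paper: write $\log\boldsymbol B_{\mathcal D}=-p\log\det(I-WW^{*})$, invoke Lemma \ref{lem:det} for the Taylor expansion, and read off the mixed second derivatives at the origin. You spell out more carefully than the paper why the remainder $P(X)$ and the higher powers in the $\log(1+u)$ expansion contribute nothing to $\partial_{w_i}\partial_{\bar w_j}|_{0}$ (via the bidegree-$(1,1)$ observation for each $X_{ab}$), whereas the paper simply asserts the vanishing; otherwise the arguments are identical.
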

\begin{proof}
Lemma \ref{lem:det} says that
$$\log \boldsymbol B_{\mathcal D}(Z)=-p\log\big(1-\sum_{i=1}^{r}\|Z_i\|^2+ P(X)\big).$$ It now
follows that $\big (\frac{\partial ^{2}}{\partial
w_{i}{\partial}\bar{w}_{j}}\log \boldsymbol B_{\mathcal D}\big )
(0) = 0,$  $i \neq j$. On the other hand, $\big (\frac{\partial
^{2}}{\partial w_{i}{\partial}\bar{w}_{i}}\log \boldsymbol
B_{\mathcal D}\big ) (0)=p,$  $i=1,\ldots , rs.$
\end{proof}

In consequence, for the matrix ball $\mathcal D$, which is a
homogeneous domain in $\mathbb C^{r\times s}$, we record
separately the transformation rule:
\begin{align}\label{transinvtranspose}
\big (\mathcal K_{\bSB_{\mathcal D}}(W)^{\rm t}\big )^{-1} &=
\big(D \varphi_{W}(W)\big)^{-1} \big (\mathcal K_{\bSB_{\mathcal
D}}(0)^{\rm t}\big )^{-1}
 \big (\overline{D\varphi_{W}(W)}^{\rm \,\,t}\big )^{-1}\nonumber\\
&=\frac{1}{p}\big ( \overline{D\varphi_{W}(W)}^{\rm \,\,t}D
\varphi_{W}(W)\big ) ^{-1}, \,\,W\in \mathcal D,
\end{align}
where $p=r+s$.

\section{Curvature inequalities}
\subsection{The Euclidean Ball}
Let $\Omega$ be a homogeneous domain and
$\theta_{w}:\Omega\rightarrow \Omega$ be a bi-holomorphic
automorphism of $\Omega$ with $\theta_{w}(w)=0.$  The linear map
$D\theta_{w}(w): (\mathbb C^m,  C_{\Omega, w}) \to (\mathbb C^m,
C_{\Omega,0})$ is a contraction by definition. Since $\theta_w$ is
invertible, $D\theta_w^{-1}(0): (\mathbb C^m, C_{\Omega,0}) \to
(\mathbb C^m, C_{\Omega, w})$ is also a contraction. However,
since $D\theta_w^{-1}(0) = D\theta_w(w)^{-1}$, it follows that
$D\theta_w(w)$ must be an isometry.  We paraphrase Theorem 5.2 
from \cite{GM}. 
\begin{lem} \label{A(w)} If $\Omega$ is a homogeneous domain, 
then for any $w\in \Omega,$ we have that  $$\|A(w)^{\rm t}
\|_{\ell^2 \rightarrow  C_{\Omega, w}}\leq 1 \mbox{~if and only if~} \|A(0)^{\rm t}\|_{\ell^2 \rightarrow  C_{\Omega, 0}}\leq 1,$$
where $A(w)$ is determined from the equation $\big (-\mathcal K_{\boldsymbol T}(w)^{\rm t}\big )^{-1}
= A(w)^{\rm t}\overline{A(w)}$  as in \eqref{curvform}. 
\end{lem}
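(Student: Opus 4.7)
The plan is to prove the stronger statement that the two operator norms are in fact equal. Two ingredients will be used: the observation just above the lemma that, for a homogeneous $\Omega$, the derivative $D\theta_w(w)$ of an automorphism $\theta_w\colon\Omega\to\Omega$ sending $w$ to $0$ is an isometry from $(\mathbb C^m, C_{\Omega,w})$ onto $(\mathbb C^m, C_{\Omega,0})$, and the transformation rule for the curvature of $\boldsymbol T$ (analogous to Proposition \ref{lemm:main3} for the Bergman metric),
\[
\mathcal K_{\boldsymbol T}(w) \;=\; D\theta_w(w)^{\rm t}\,\mathcal K_{\boldsymbol T}(0)\,\overline{D\theta_w(w)}.
\]

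First, I would transpose and invert this identity and substitute into the defining relation $\bigl(-\mathcal K_{\boldsymbol T}(w)^{\rm t}\bigr)^{-1}=A(w)^{\rm t}\overline{A(w)}$ from \eqref{curvform}, together with its counterpart at $0$, to obtain
\[
\bigl(D\theta_w(w)\,A(w)^{\rm t}\bigr)\bigl(D\theta_w(w)\,A(w)^{\rm t}\bigr)^{*}\;=\;A(0)^{\rm t}\bigl(A(0)^{\rm t}\bigr)^{*}.
\]
Since both sides are Grammians of invertible square matrices, the left polar decomposition yields a unitary matrix $U$ on $\ell^2$ with
\[
D\theta_w(w)\,A(w)^{\rm t}\;=\;A(0)^{\rm t}\,U.
\]

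For any $v\in\mathbb C^m$, the isometry property of $D\theta_w(w)$ now gives
\[
\|A(w)^{\rm t}v\|_{C_{\Omega,w}}\;=\;\|D\theta_w(w)\,A(w)^{\rm t}v\|_{C_{\Omega,0}}\;=\;\|A(0)^{\rm t}\,Uv\|_{C_{\Omega,0}}.
\]
Taking the supremum over the unit ball of $\ell^2$ and using that $U$ permutes this unit ball, one concludes
\[
\|A(w)^{\rm t}\|_{\ell^2\to C_{\Omega,w}}\;=\;\|A(0)^{\rm t}\|_{\ell^2\to C_{\Omega,0}},
\]
which of course implies the asserted equivalence. The only delicate point is keeping transposes and conjugate transposes straight while converting the transformation rule for $\mathcal K_{\boldsymbol T}$ into the identity of Grammians above; once this bookkeeping is done, the rest is a routine combination of polar decomposition and the definitions of the two operator norms.
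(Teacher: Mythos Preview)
Your argument hinges on the identity
\[
\mathcal K_{\boldsymbol T}(w) \;=\; D\theta_w(w)^{\rm t}\,\mathcal K_{\boldsymbol T}(0)\,\overline{D\theta_w(w)},
\]
which you describe as ``analogous to Proposition~\ref{lemm:main3}''. That proposition, however, is proved using the specific transformation law of the \emph{Bergman} kernel under biholomorphisms (Lemma~\ref{lemm:main2}); the proof collapses to the fact that $\log|J_{\mathbb C}\varphi|^2$ is pluriharmonic. For an arbitrary $\boldsymbol T\in\mathrm B_1(\Omega)$ there is no such transformation law for the associated kernel, and the displayed identity is simply false in general (take, for instance, any kernel on the disc that is not a power of the Szeg\H{o} kernel). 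Everything from your Grammian identity onward rests on this, so the argument does not prove the lemma as stated.

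The paper sidesteps this entirely. It never relates $\mathcal K_{\boldsymbol T}(w)$ to $\mathcal K_{\boldsymbol T}(0)$; instead it \emph{defines} $A(0)^{\rm t}:=D\theta_w(w)\,A(w)^{\rm t}$ inside the proof and then shows, via the dual description $\boldsymbol D_w\Omega=\{u\,D\theta_w(w):u\in\boldsymbol D_0\Omega\}$, that
\[
\sup_{\substack{v\in\boldsymbol D_w\Omega\\ \|x\|_2\le 1}}|v\,A(w)^{\rm t}x|
=\sup_{\substack{u\in\boldsymbol D_0\Omega\\ \|x\|_2\le 1}}|u\,A(0)^{\rm t}x|.
\]
This is nothing more than the isometry of $D\theta_w(w)$ that you also use, but it requires no hypothesis on $\boldsymbol T$. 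Your polar-decomposition step and the unitary $U$ are then unnecessary: with the paper's definition one has $D\theta_w(w)A(w)^{\rm t}=A(0)^{\rm t}$ on the nose. Where your approach \emph{does} add something is in the special cases treated later (the kernels $\boldsymbol B_{\mathcal D}^\lambda$), for which the transformation rule genuinely holds by \eqref{transK^l}; there your argument explains why the paper's ad hoc $A(0)$ agrees, up to a unitary on $\ell^2$, with the matrix coming from the curvature at $0$.
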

\begin{proof}
As before, let $\boldsymbol D_{w}\Omega:=\{D f(w):f\in {\rm
Hol}_w(\Omega, \mathbb D)\}.$ The map $\varphi\mapsto
\varphi\circ\theta_w$ is  injective from ${\rm Hol}_0(\Omega,
\mathbb D)$ onto ${\rm Hol}_w(\Omega, \mathbb D).$ Therefore,
\begin{align*}
\boldsymbol D_{w}\Omega &=\{D(f\circ\theta_{w})(w): f \in {\rm
Hol}_0(\Omega, \mathbb D)\}\\&=\{Df(0)D\theta_{w}(w): f \in {\rm
Hol}_0(\Omega, \mathbb D)\}\\&=\{u  D\theta_w(w): u \in
\boldsymbol D_{0}\Omega\}
\end{align*}
This is another way of saying that $D\theta_w(w)^{\rm tr}:(\mathbb C^m, C_{\Omega,0})^* \to (\mathbb C^m, C_{\Omega,w})^*$ 
is an isometry. Now, we have 
\begin{align*}
\sup\{|v A(w)^{\rm t} x| : v \in \boldsymbol D_{w}\Omega,\, \|x\|_2 \leq 1\}
&=\sup\{|u D\theta_w(w) A(w)^{\rm t} x| : u \in \boldsymbol D_{0}\Omega,\, \|x\|_2 \leq 1\}
\|\\&=\sup\{| u A(0)^{\rm t} x| : u \in \boldsymbol D_{0}\Omega,\, \|x\|_2 \leq 1\},
\end{align*} where $A(0)^{\rm t}:=D\theta_w(w) A(w)^{\rm t}.$ Thus we have shown
\begin{align*} \label{Dw0}
\{ A(w)^{\rm t}: \|A(w)^{\rm t} \|_{\ell^2 \rightarrow C_{\Omega,
w}}\leq 1\}&= \{D\theta_w(w)^{-1} A(0)^{\rm t}: \|A(0)^{\rm t}
\|_{\ell^2 \rightarrow  C_{\Omega, w}}\}\\&=\{D\theta^{-1}_{w}(0) A(0)^{\rm
t}: \|A(0)^{\rm t} \|_{\ell^2 \rightarrow
C_{\Omega, w}}\}.
\end{align*}
The proof is now complete since $D\theta^{-1}_{w}(0)$ is an isometry.
\end{proof}



We  note that if $\|A(w)^{\rm t}\|_{\ell^2 \rightarrow C_{\Omega,
w}}\leq 1,$ then
\begin{align} \|\big
(\mathcal K_{\boldsymbol T} (w)^{\rm t} \big )^{-1}\|_{ C_{\Omega,
w}^*\rightarrow
 C_{\Omega, w}}\nonumber &= \|A(w)^{\rm
t}\overline{A(w)}\|_{ C_{\Omega, w}^* \rightarrow  C_{\Omega,
w}}\\\nonumber &\leq \|A(w)^{\rm t}\|_{\ell^2 \rightarrow
C_{\Omega, w}} \|\overline{A(w)}\|_{ C_{\Omega, w}^* \to \ell^2}\\
& = \|A(w)^{\rm t}\|^2_{\ell^2 \rightarrow C_{\Omega, w}} \leq
1,\end{align} which is the curvature inequality of 
\cite[Theorem 5.2]{GM}. For a homogeneous domain $\Omega$, using
the transformation rules in Corollary \ref{transK} and the
equation \eqref{transinvtranspose}, for the curvature $\mathcal K$
of the Bergman kernel $\bSB_\Omega$, we have
\begin{align}
\|\big (\mathcal K_{\boldsymbol T} (w)^{\rm t} \big )^{-1}\|_{
C_{\Omega, w}^*\rightarrow  C_{\Omega, w}} \nonumber&=\big \|
{\big ( D\theta_w(w)^{\rm t} \mathcal K(0) \overline{D\theta_w(w)}
\big )^{\rm t}}^{-1} \big \|_{ C_{\Omega, w}^*\rightarrow
C_{\Omega, w}}\\\nonumber &=\big \| D\theta_w(w)^{-1}\big
(\mathcal K(0)^{\rm t}\big )^{-1} \overline{
D\theta_w(w)^{-1}}^{\rm t} \|_{ C_{\Omega, w}^*\rightarrow
C_{\Omega, w}}\\\nonumber &=\big \| D\theta_w(w)^{-1} A(0)^{\rm t}
\overline{A(0)} \overline{ D\theta_w(w)^{-1}}^{\rm t}
\|_ { C_{\Omega, w}^*\rightarrow  C_{\Omega, w}}\\
&\leq \big \| D\theta_w(w)^{-1} A(0)^{\rm t}\|_{\ell^2 \to
 C_{\Omega, w}}^2 =  \big \| A(0)^{\rm t}\|_{\ell^2
\to  C_{\Omega, 0}}^2
\end{align}
since $ D\theta_w(w)^{-1}$ is an isometry.  For the Euclidean ball
$\mathbb B:= \mathbb B^n$, the inequality for the curvature is
more explicit. In the following, we set $\mathfrak B(w,w):= \big (
\bSB_\mathbb B(w,w)\big )^{-\frac{1}{n+1}}$. Thus polarizing
$\mathfrak B$, we have $\mathfrak B(z,w) = \big ( 1 - \langle z,
w\rangle)^{-1}$, $z,w \in \mathbb B$.
 The inequality appearing below (cf. \cite{GM}) is a point-wise inequality with respect to
 the usual ordering of Hermitian
 matrices.
\begin{thm}\label{lemm:main4}Let $\theta_w$ be a
bi-holomorphic automorphism of $\mathbb B$ such that
$\theta_w(w)=0.$ If $\rho_{\boldsymbol T}$ is a contractive homomorphism
of $\mathcal O(\mathbb B)$ induced by the localization $N_{\mathbf
T}(w)$, $\boldsymbol T \in \mathrm B_1(\mathbb B),$ then
$$\mathcal K_{\boldsymbol T}(w)\leq -\overline{D\theta_w(w)}^tD\theta_w(w)=
\mathcal K_{\mathfrak B}(w),\,\, w\in \mathbb B$$
\end{thm}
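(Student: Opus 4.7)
The plan is to turn the contractivity hypothesis into a matrix inequality at $w$ by exploiting the explicit form of the Carath\'eodory metric on the Euclidean ball, and then to identify the resulting upper bound with $\mathcal K_{\mathfrak B}(w)$ via the transformation rule of Proposition \ref{lemm:main3}.

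The starting point is \cite[Theorem 5.2]{GM}, which gives that contractivity of $\rho_{\boldsymbol T}$ implies $\|A(w)^{\rm t}\|_{\ell^2\to C_{\mathbb B,w}}\le 1$, with $A(w)$ as in \eqref{curvform}. By Lemma \ref{A(w)}, exploiting the homogeneity of $\mathbb B$, this is equivalent to $\|A(0)^{\rm t}\|_{\ell^2\to C_{\mathbb B,0}}\le 1$, where $A(0)^{\rm t}:=D\theta_w(w)\,A(w)^{\rm t}$. The key simplification specific to the Euclidean ball is that, by the Schwarz lemma, $C_{\mathbb B,0}$ coincides with the Euclidean norm $\|\cdot\|_{\ell^2}$. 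Hence the inequality is the purely linear-algebraic statement that $D\theta_w(w)\,A(w)^{\rm t}$ is a contraction on $\ell^2$, that is,
\[
D\theta_w(w)\,A(w)^{\rm t}\,\overline{A(w)}\,\overline{D\theta_w(w)}^{\rm t}\;\le\;I_m.
\]

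Setting $X=D\theta_w(w)$, this has the form $X\,M\,X^*\le I_m$ with $M$ positive definite, so it is equivalent to $M\le X^{-1}(X^{-1})^*=\big(\overline{D\theta_w(w)}^{\rm t}\,D\theta_w(w)\big)^{-1}$. Invoking the identity $M=A(w)^{\rm t}\overline{A(w)}=\big(-\mathcal K_{\boldsymbol T}(w)^{\rm t}\big)^{-1}$ from \eqref{curvform} and inverting both (positive definite) sides yields $-\mathcal K_{\boldsymbol T}(w)^{\rm t}\ge \overline{D\theta_w(w)}^{\rm t}\,D\theta_w(w)$, equivalently $\mathcal K_{\boldsymbol T}(w)\le -\overline{D\theta_w(w)}^{\rm t}\,D\theta_w(w)$, since both sides are Hermitian.

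To finish, I must recognize this upper bound as $\mathcal K_{\mathfrak B}(w)$. A direct calculation from $\log\mathfrak B(w,w)=-\log(1-\|w\|^2)$ shows $\mathcal K_{\mathfrak B}(0)=-I_m$; the transformation rule of Proposition \ref{lemm:main3}, applied to the automorphism $\theta_w$ of $\mathbb B$ with $\theta_w(w)=0$, then gives $\mathcal K_{\mathfrak B}(w)=-\overline{D\theta_w(w)}^{\rm t}\,D\theta_w(w)$, as required. The only delicate point in the argument is the careful book-keeping of transposes, complex conjugates, and the direction of the inequality under inversion of positive-definite matrices; once this is set up, each individual step is short.
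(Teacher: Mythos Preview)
Your proof is correct and follows essentially the same route as the paper's: both use Lemma~\ref{A(w)} together with the identification $C_{\mathbb B,0}=\|\cdot\|_{\ell^2}$ to obtain $\|D\theta_w(w)A(w)^{\rm t}\|_{\ell^2\to\ell^2}\le 1$, then rewrite this as the matrix inequality $A(w)^{\rm t}\overline{A(w)}\le\big(\overline{D\theta_w(w)}^{\rm t}D\theta_w(w)\big)^{-1}$, invoke \eqref{curvform}, and invert. Your write-up is in fact a bit more explicit than the paper's in justifying the final identification $-\overline{D\theta_w(w)}^{\rm t}D\theta_w(w)=\mathcal K_{\mathfrak B}(w)$ via the transformation rule and the computation $\mathcal K_{\mathfrak B}(0)=-I_m$.
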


\begin{proof}The equation \eqref{Dw0} combined with the equality $C_{\mathbb B,0}=\|\cdot \|_{\ell^2}$ and the
contractivity of $\rho_{\boldsymbol T}$ implies that $\|D\theta_w(w)
A(w)^t\|_{\ell_2\rightarrow\ell_2}\leq 1.$  Hence
\begin{eqnarray*}
I-D\theta_w(w)A(w)^t\overline{A(w)}\,\overline{D\theta_w(w)}^{\rm
\,\,t}\geq
0&\Leftrightarrow&(D\theta_w(w))^{-1}\big(\overline{D\theta_w(w)}^{\rm
\,\,t}\big)^{-1}-A(w)^t\overline{A(w)}\geq
0\\&\Leftrightarrow&A(w)^t\overline{A(w)}\leq(D\theta_w(w))^{-1}\big(\overline{D\theta_w(w)}^{\rm
\,\,t}\big)^{-1}\\&\Leftrightarrow& \big(- \mathcal K_{\boldsymbol
T}(w)^{\rm t}\big)^{-1}\leq \big ( \overline{D\varphi_{w}(w)}^{\rm
\,\,t}D
\varphi_{w}(w)\big ) ^{-1}.\\
\end{eqnarray*}
Since $-\big(\mathcal K_{\boldsymbol T}(w)^{\rm t}\big)^{-1}$ and
$\big ( \overline{D\theta_w(w)}^{\rm t}D\theta_w(w)\big )^{-1}$
are positive definite matrices, it follows (cf. \cite{rajendra})
that $\mathcal K_{\boldsymbol T}(w)\leq
-\overline{D\theta_w(w)}^{\rm t}D\theta_w(w) =\mathcal
K_{\mathfrak B}(w).$
\end{proof}
This inequality generalizes the curvature inequality obtained in
\cite[Corollary 1.2${}^\prime$]{misra} for the unit disc.
However, assuming that $\mathcal K_{\mathfrak B^{-1}\,K}(w)$ is a
non-negative kernel defined on the ball $\mathbb B$ implies
$(\mathfrak B(w))^{-1}K(w)$ is a non-negative kernel on $\mathbb
B$ (cf. \cite[Theorem 4.1]{shibu}), indeed, it must be infinitely
divisible. This stronger assumption on the curvature amounts to
the factorization of the kernel $K(z,w) = \mathfrak
B(z,w)\tilde{K}(z,w)$ for some positive definite kernel
$\tilde{K}$ on the ball $\mathbb B$ with the property: $\big
(\mathfrak B(z,w)\tilde{K}(z,w)\big )^\lambda$ is non-negative
definite for all $\lambda >0$.


For $\lambda >0$, the polarization of the function
$\bSB(w,w)^\lambda$ defines a positive definite kernel
$\bSB^\lambda(z,w)$ on the ball $\mathbb B$  (cf.
\cite[Proposition 5.5]{arazy}).
We note that $\mathcal K_{\bSB^{\lambda}}(w)\leq \mathcal
K_{\mathfrak B}(w)$ if and only if $\mathcal
K_{\bSB^{\lambda}}(0)\leq \mathcal K_{\mathfrak B}(0)=-I.$ Since
$\mathcal K_{\bSB^{\lambda}}(0)=-\lambda(n+1)I,$ it follows that
$\mathcal K_{\bSB^{\lambda}}(w)\leq \mathcal K_{\mathfrak B}(w)$
if and only if $\lambda \geq \frac{1}{n+1}.$
Thus whenever $\lambda \geq \frac{1}{n+1}$, we have the point-wise
curvature inequality for $\bSB^\lambda(w, w)$. However, since the
operator of multiplication by the co-ordinate functions on the
Hilbert space corresponding to the kernel $\bSB^{\lambda}(w, w),$
is not even a contraction for $\frac{1}{n+1} \leq \lambda <
\frac{n}{n+1},$ the induced homomorphism can't be contractive.  We
therefore conclude that the curvature inequality does not imply
the contractivity of $\rho$ whenever $n>1$. For $n=1$, an example
illustrating this (for the unit disc) was given in 
\cite[page 2]{shibu}. Thus the contractivity of the homomorphism
induced by the commuting tuple of local operators $N_{\mathbf
T}(w),$ for $\boldsymbol T\in \mathrm B_1(\mathbb B),$ does not imply
the contractivity of the homomorphism induced by the commuting
tuple of operators $\boldsymbol T$.

\subsection{The matrix ball}
Recall that $\mathcal N^{(\lambda)}(w)$ is the $m+1$ dimensional space spanned by the vectors
$\bSB_\mathcal D^\lambda(\cdot,w),
\bar{\partial}_1\bSB_\mathcal D^\lambda(\cdot, w), \ldots , \bar{\partial}_m\bSB_\mathcal D^\lambda(\cdot,w).$ On this space, there exists a canonical $m$-tuple of jointly commuting nilpotent operators, namely,
$$N_i^{(\lambda)}(w)\big ( \bar{\partial}_j\bSB_\mathcal D^\lambda(\cdot, w) \big ) =
\begin{cases}
 \bSB_\mathcal D^\lambda(\cdot,w) & \mbox{ if } i=j \\
 0 & \mbox{otherwise}
\end{cases}.
$$
We recall that the positive function $\bSB_\mathcal D^\lambda,
\lambda >0,$ defines an inner product on the finite dimensional
space $\mathcal N^{(\lambda)}(w)$ for all $\lambda >0$ irrespective of whether $\bSB_\mathcal D^\lambda$
is positive definite on the
matrix ball $\mathcal D$ or not. Let $N^{(\lambda)}(w)$ denote the commuting $m$-tuple of matrices
 $(N^{(\lambda)}_1(w) + w_1I, \ldots , N^{(\lambda)}_m(w)+w_mI)$ represented with respect to some orthonormal basis in
 $\mathcal N^{(\lambda)}(w).$ If $\bSB_\mathcal D^\lambda$ happens to be positive definite for some
 $\lambda > 0$ (this is the case if $\lambda$ is a natural number), then ${\boldsymbol N}^{(\lambda)}$ is
 nothing but the restriction of the adjoint of the multiplication operators induced by the
 coordinate functions to the subspace $\mathcal N^{(\lambda)}(w)$ in the Hilbert space determined
 by the positive definite kernel  $\bSB_\mathcal D^\lambda.$  In this section, we exclusively
study the contractivity of the
homomorphism $\rho_{_{\!{\boldsymbol N}^{(\lambda)}(w)}}$ induced by the
commuting $m$-tuples ${\boldsymbol N}^{(\lambda)}(w).$

We set $\mathcal K^{(\lambda)}(w):=\mathcal K_{\bSB_\mathcal D^\lambda}(w),$ $w\in \mathcal D.$
If the homomorphism  $\rho_{_{\!{\boldsymbol N}^{(\lambda)}(w)}}$ is
contractive for some $\lambda >0$, then for this $\lambda$, we
have:$\|\big({\mathcal K^{(\lambda)}}^{\rm t}\big)^{-1}(0)\|\leq
1.$ Like the Euclidian Ball, we study several implications of the
curvature inequality in this case, as well.

\begin{thm}\label{pmat}
For $\lambda >0,$  we have $\|\big({\mathcal K^{(\lambda)}}^{\rm
t}\big)^{-1}(0)\|_{C_{\mathcal D, 0}^* \to
C_{\mathcal D, 0}} = \frac{1}{\lambda p},$ $p=r+s.$
\end{thm}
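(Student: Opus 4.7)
The plan is to reduce the statement to a concrete identity-operator norm computation between two matrix norms on $\mathcal{M}_{rs}(\mathbb{C})\cong \mathbb{C}^{rs}$. The key observation is that Theorem \ref{curva} gives $\mathcal{K}_{\bSB_\mathcal{D}}(0) = pI$, and the transformation rule \eqref{transK^l} then yields
\[
\mathcal{K}^{(\lambda)}(0) \;=\; \mathcal{K}_{\bSB^{\lambda}_\mathcal{D}}(0) \;=\; \lambda\,\mathcal{K}_{\bSB_\mathcal{D}}(0) \;=\; \lambda p\,I_{rs\times rs}.
\]
Consequently $\bigl({\mathcal{K}^{(\lambda)}}^{\rm t}\bigr)^{-1}(0) = \tfrac{1}{\lambda p} I_{rs\times rs}$, and so the assertion reduces to showing that the operator norm of the identity from $(\mathbb{C}^{rs}, C_{\mathcal{D},0}^*)$ to $(\mathbb{C}^{rs}, C_{\mathcal{D},0})$ is exactly $1$.

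To handle the norms, I would identify $C_{\mathcal{D},0}$ explicitly. Since $\mathcal{D}$ is a balanced, bounded, convex domain, by a standard fact the Carath\'eodory infinitesimal metric at the origin coincides with the Minkowski functional of $\mathcal{D}$; under the identification $\mathbb{C}^{rs}\cong \mathcal{M}_{rs}(\mathbb{C})$ this is precisely the operator (spectral) norm $\|\cdot\|_{\rm op}$. The dual norm $C_{\mathcal{D},0}^*$, taken with respect to the standard Hermitian pairing $\langle X,Y\rangle = \mathrm{tr}(XY^*)$, is then the trace (Schatten $1$-) norm $\|\cdot\|_1$, by the classical duality between the Schatten $\infty$- and $1$-classes.

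It then remains to evaluate
\[
\bigl\|\,I\,\bigr\|_{(\mathcal{M}_{rs},\|\cdot\|_1)\,\to\,(\mathcal{M}_{rs},\|\cdot\|_{\rm op})} \;=\; \sup_{\|X\|_1\leq 1}\|X\|_{\rm op}.
\]
Since $\|X\|_{\rm op}\leq \|X\|_1$ is immediate from the singular value decomposition (the largest singular value is bounded by the sum), this supremum is at most $1$; equality is achieved by taking any rank-one matrix of unit operator norm (which automatically has unit trace norm). Hence the identity map has norm exactly $1$, and combining with the scalar $\tfrac{1}{\lambda p}$ from $\bigl({\mathcal{K}^{(\lambda)}}^{\rm t}\bigr)^{-1}(0)$ gives the claimed value $\tfrac{1}{\lambda p}$.

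The main obstacle, in my view, is not any of the computations themselves but rather the identification of $C_{\mathcal{D},0}$ with $\|\cdot\|_{\rm op}$ and its dual with $\|\cdot\|_1$ with respect to the correct pairing; once these identifications are in place the proof is essentially a one-line computation using Theorem \ref{curva}. If the authors prefer to avoid invoking the Minkowski-functional theorem for the Carath\'eodory metric, one can give a direct argument: the contractivity of linear functionals $f(Z) = \mathrm{tr}(AZ)$ on $\mathcal{D}$ reduces, via $\sup_{\|Z\|_{\rm op}\leq 1}|\mathrm{tr}(AZ)| = \|A\|_1$, to the same Schatten duality, so $\mathbf{D}_0\mathcal{D}$ coincides with the unit ball of $\|\cdot\|_1$ and the conclusion follows identically.
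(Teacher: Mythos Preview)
Your proof is correct and follows essentially the same route as the paper: compute $(\mathcal{K}^{(\lambda)})^{\rm t}(0)^{-1}=\tfrac{1}{\lambda p}I_{rs}$ from Theorem~\ref{curva} and the relation $\mathcal{K}^{(\lambda)}=\lambda\mathcal{K}$, then identify $C_{\mathcal D,0}$ with the operator norm and its dual with the trace norm to evaluate $\|I_{rs}\|$. Your version is in fact slightly more complete than the paper's, which only records the inequality $\|I_{rs}\|_{C_{\mathcal D,0}^*\to C_{\mathcal D,0}}\leq 1$ without exhibiting a rank-one matrix to witness equality.
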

\begin{proof}
We have shown that $\big (\mathcal K^{\rm t}\big )^{-1}(0)=
\frac{1}{p} I_{rs}.$ Since $C_{\mathcal D, 0}$ is the
operator norm on $(\mathcal M)_{rs}$ and consequently $
C_{\mathcal D, 0}^*$ is the trace norm, it follows that
$\|I_{rs}\|_{ C_{\mathcal D, 0}^* \to  C_{\mathcal
D, 0}} \leq 1.$ This completes the proof.
\end{proof}
The following Theorem provides a necessary condition for the
contractivity of the homomorphism induced by the commuting tuple
of local operators ${\boldsymbol N}^{(\lambda)}(w).$

\begin{thm} \label{themm2}
If the homomorphism  $\rho_{_{\!{\boldsymbol N}^{(\lambda)}(w)}}$ is
contractive, then $\nu \geq 1,$ where $\nu= \lambda p.$
\end{thm}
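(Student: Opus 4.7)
The plan is to combine the curvature inequality that follows from contractivity with the explicit computation of the norm of the inverse curvature at the origin obtained in Theorem \ref{pmat}.

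First, I would invoke the curvature inequality recalled just after \eqref{homN}: namely, that the contractivity of $\rho_{_{\!{\boldsymbol N}^{(\lambda)}(w)}}$ on $\mathcal O(\mathcal D)$ implies, at every point $w\in\mathcal D,$ the operator-norm bound
\[
\bigl\|\bigl({\mathcal K^{(\lambda)}}^{\rm t}(w)\bigr)^{-1}\bigr\|_{C_{\mathcal D,w}^{*}\to C_{\mathcal D,w}}\;\leq\;1.
\]
This is precisely the curvature inequality of \cite[Theorem 5.2]{GM} applied to the commuting $m$-tuple $\boldsymbol N^{(\lambda)}(w)$ in place of $\boldsymbol T$.

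Next, I would specialize this inequality to the origin $w=0$. By Theorem \ref{pmat} we have
\[
\bigl\|\bigl({\mathcal K^{(\lambda)}}^{\rm t}(0)\bigr)^{-1}\bigr\|_{C_{\mathcal D,0}^{*}\to C_{\mathcal D,0}}=\frac{1}{\lambda p},
\]
so the curvature inequality at $0$ reads $\tfrac{1}{\lambda p}\leq 1$, which is exactly $\nu=\lambda p\geq 1$.

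Putting these two facts together gives the conclusion in a single line, so there is no serious obstacle here; the work was already done in establishing \eqref{transK^l}, Theorem \ref{curva} and Theorem \ref{pmat}, which together pin down $\bigl({\mathcal K^{(\lambda)}}^{\rm t}\bigr)^{-1}(0)=\tfrac{1}{\lambda p}I_{rs}$ and identify the relevant operator norm (identity from the trace norm to the operator norm on $\mathcal M_{rs}(\mathbb C)$, which equals $1$). The only subtlety worth flagging in the write-up is that the Carathéodory ball at $0$ coincides with the operator-norm ball of $\mathcal M_{rs}(\mathbb C)$, so $C_{\mathcal D,0}=\|\cdot\|_{\rm op}$ and $C_{\mathcal D,0}^{*}=\|\cdot\|_{\rm tr}$; this is what makes the norm of $I_{rs}$ equal to $1$ and thereby converts the curvature inequality into the clean numerical bound $\lambda p\geq 1$.
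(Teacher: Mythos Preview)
Your approach is essentially the same as the paper's---deduce the curvature inequality from contractivity and then invoke Theorem \ref{pmat}---but there is a small gap in the reduction step. The hypothesis gives contractivity of $\rho_{_{\!{\boldsymbol N}^{(\lambda)}(w)}}$ for a \emph{fixed} $w\in\mathcal D$, and hence the curvature inequality only at that $w$; you cannot simply ``specialize this inequality to the origin $w=0$'' without further justification. The paper closes this gap by invoking the homogeneity of $\mathcal D$ via Lemma \ref{A(w)}: since $D\theta_w(w)$ is an isometry between $(\mathbb C^{rs},C_{\mathcal D,0})$ and $(\mathbb C^{rs},C_{\mathcal D,w})$, contractivity of $\rho_{_{\!{\boldsymbol N}^{(\lambda)}(w)}}$ at any $w$ is equivalent to $\|A(0)^{\rm t}\|_{\ell^2\to C_{\mathcal D,0}}\leq 1$, which then yields the curvature inequality at $0$. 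Equivalently, you could observe (using \eqref{transK^l} and the isometry property of $D\theta_w(w)$) that the norm $\|({\mathcal K^{(\lambda)}}^{\rm t}(w))^{-1}\|_{C_{\mathcal D,w}^*\to C_{\mathcal D,w}}$ is actually independent of $w$ and always equals $\tfrac{1}{\lambda p}$. Either way, the homogeneity of $\mathcal D$ is the missing ingredient; once you insert it, your argument and the paper's coincide.
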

\begin{proof}
The matrix unit ball $\mathcal D$ is homogeneous. Let
$\theta_{w}$ be the bi-holomorphic automorphism of $\mathcal D$
with $\theta_{w}(w)=0.$ We have seen that $A(w)^{\rm t}=A(0)^{\rm
t}D\theta_w^{-1}(0).$ Since $D\theta_w^{-1}(0)$ is an isometry,
therefore the contractivity of
$\rho_{_{\!{\boldsymbol N}^{(\lambda)}(0)}}$ implies the contractivity
of  $\rho_{_{\!{\boldsymbol N}^{(\lambda)}(w)}},$  $w \in \Omega$ (see
Lemma \ref{A(w)}). The contractivity of
$\rho_{_{\!{\boldsymbol N}^{(\lambda)}(w)}}$ is equivalent to
$\|A(0)^{\rm t}\|_{\ell^2 \rightarrow  C_{\mathcal D,
0}}\leq 1.$ Therefore the contractivity of
$\rho_{_{\!{\boldsymbol N}^{(\lambda)}(w)}},$ for some $w \in \mathcal
D,$ implies $\|\big({\mathcal K^{(\lambda)}}^{\rm
t}\big)^{-\frac{1}{2}}(0)\|_{C_{\mathcal D, 0}^* \to
 C_{\mathcal D, 0}}\leq 1.$ Theorem \ref{pmat} shows that
$\nu \geq 1.$
\end{proof}

If $\lambda >0$ is picked such that $\bSB^\lambda_\mathcal D$ is
positive definite, then Arazy and Zhang (cf. \cite[Proposition
5.5]{arazy}) prove that the homomorphism induced by the commuting
tuple of multiplication operators on the twisted Bergman space
$\mathbb A^{(\lambda)}(\mathcal D)$ is bounded (k-spectral) if and only if $\nu \geq s.$


It follows that if $1\leq \nu<s$, then the homomorphism induced by
the commuting tuple of multiplication operators is not contractive
on the twisted Bergman space $\mathbb A^{(\lambda)}(\mathcal D),$
while the homomorphism  $\rho_{_{\!{\boldsymbol N}^{(\lambda)}(w)}},$  $w
\in \Omega,$ is contractive on the finite dimensional Hilbert
space $\mathcal {\boldsymbol N}^{(\lambda)}(w)$. This is equivalent to the
curvature inequality for $\nu \geq 1.$ However, for $1\leq \nu<s,$
the $rs$-tuple of multiplication operators on the twisted Bergman
space $\mathbb A^{(\lambda)}(\mathcal D)$ is not contractive.
This shows that the curvature inequality is not sufficient for
contractivity of the homomorphism induced by the commuting tuple of
multiplication operators on the twisted Bergman spaces $\mathbb A^{(\lambda)}(\mathcal D),$
when $1\leq \nu<s$ and $n>1.$

We have seen that any commuting tuple of
operators $\boldsymbol T$ in $\mathrm B_1(\mathcal D)$  induces a
homomorphism  $\rho_{_{\!{\boldsymbol N}^{(\lambda)}(w)}}:\mathcal
O(\mathcal D) \rightarrow \mathcal L(\mathbb C^{rs+1}),\, \lambda > 0,$ as in the first paragraph of this subsection. Indeed, what we have said applies equally well to a generalized Bergman kernel, in the language of Curto and Salinas, or to a commuting tuple of operators in the Cowen-Douglas class.
We note that $\rho_{_{\!{\boldsymbol N}^{(\lambda)}(w)}} \otimes I_{rs}: \mathcal
O(\mathcal D) \otimes \mathcal M_{rs} \to \mathcal L(\mathcal
N(w)) \otimes \mathcal M_{rs}$ is given by the formula
\begin{equation}\label{homNmat}
(\rho_{_{\!{\boldsymbol N}^{(\lambda)}(w)}} \otimes I_{rs})(P):=\left(\begin{matrix}P(w) \otimes
I_{rs} & DP(w) \cdot N(w)\\
0 & P(w) \otimes I_{rs} \end{matrix}\right),$$ where $$D P(w)
\cdot N(w)=\partial_1 P(w) \otimes N_1(w)+ \ldots
+\partial_{d}P(w)\otimes N_{rs}(w).
\end{equation}

The contractivity of $\rho_{_{\!{\boldsymbol N}^{(\lambda)}(w)}}\otimes I_{rs},$ as shown
in \cite[Theorem 1.7]{sastry} and \cite[Theorm 4.2]{vern},
is equivalent to the contractivity of the operator
$$\partial_1 P(w) \otimes N_1(w)+ \ldots
+\partial_{d}P(w)\otimes N_{rs}(w).$$ Let
$P_\mathbf A$ be the matrix valued polynomial in $rs$ variables:
$$P_{\mathbf
A}(z)=\sum_{i=1}^{r}\sum_{j=1}^{s}z_{ij}E_{ij},$$ where $E_{ij}$
is the $r \times s$ matrix whose $(i, j)$th entry is $1$ and
other entries are $0$. Let $V=\left(\begin{smallmatrix}V_1\\
\vdots\\V_{rs}\end{smallmatrix}\right)$ be the $rs \times rs$
matrix, where
\begin{eqnarray*}V_1=(v_{11}, 0, \ldots, 0),\ldots ,V_{rs}=(
0, \ldots ,0, \ldots, v_{rs}).
\end{eqnarray*} We compute  the norm of $(\rho_{_{\!{\boldsymbol N}^{(\lambda)}(w)}} \otimes
I_{rs})(P_{\mathbf A}).$
\begin{thm}\label{P_A} For $\rho_{_{\!{\boldsymbol N}^{(\lambda)}(w)}}\otimes I_{rs}$ as above,
we have
$$\|(\rho_{_{\!{\boldsymbol N}^{(\lambda)}(w)}}\otimes I_{rs})(P_{\mathbf
A})\|^2=\max\{\sum_{i=1}^{s}|v_{1i}|^2, \ldots,
\sum_{i=1}^{s}|v_{ri}|^2\}.$$
\end{thm}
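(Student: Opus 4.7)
The plan is to compute the operator norm of $(\rho_{_{\!{\boldsymbol N}^{(\lambda)}(w)}}\otimes I_{rs})(P_{\mathbf A})$ by reducing to $w=0$ and then analyzing an explicit $r\times r$ Grammian.

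First, substitute $\partial_{(i,j)}P_{\mathbf A}=E_{ij}$ into formula \eqref{homNmat}. The result is a block matrix of size $(rs+1)\times(rs+1)$, whose blocks live in $\mathcal M_{r\times s}$: the diagonal blocks are all $W:=P_{\mathbf A}(w)$, the first row contains the blocks $M_l:=\sum_{i,j}\boldsymbol{\alpha}_{ij,l}\,E_{ij}$ for $l=1,\dots,rs$, and all remaining blocks are zero. Here $\boldsymbol{\alpha}_{ij}$ is the row of $V$ indexed by the coordinate $z_{ij}$, i.e.\ the vector appearing in the representation $N_{ij}(w)=\bigl(\begin{smallmatrix}0 & \boldsymbol{\alpha}_{ij}^{\rm t}\\ 0 & 0\end{smallmatrix}\bigr)$. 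By the homogeneity of $\mathcal D$, together with Lemma~\ref{A(w)} and Corollary~\ref{transK}, it suffices to establish the formula at $w=0$; at this point $W=0$, the block matrix collapses to a single non-trivial first row, and its operator norm squared equals $\bigl\|\sum_{l}M_l M_l^*\bigr\|$.

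Second, I would specialize to the diagonal shape of $V$ assumed in the statement. Since $\boldsymbol{\alpha}_{ij}=v_{ij}\,e_{(i,j)}$ is supported on a single coordinate of $\mathbb C^{rs}$, we have $\boldsymbol{\alpha}_{ij,l}=v_{ij}\,\delta_{l,(i,j)}$, so $M_{(i,j)}=v_{ij}E_{ij}$ and
\begin{equation*}
\sum_{i,j}M_{(i,j)}M_{(i,j)}^*=\sum_{i,j}|v_{ij}|^2\,E_{ij}E_{ij}^*=\sum_{i,j}|v_{ij}|^2\,e_i e_i^{\rm t},
\end{equation*}
which is an $r\times r$ diagonal matrix with $(i,i)$-entry $\sum_{j=1}^s |v_{ij}|^2$.

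Finally, the operator norm of a positive diagonal matrix equals its largest diagonal entry, so
\begin{equation*}
\|(\rho_{_{\!{\boldsymbol N}^{(\lambda)}(w)}}\otimes I_{rs})(P_{\mathbf A})\|^2=\max_{1\leq i\leq r}\sum_{j=1}^s |v_{ij}|^2,
\end{equation*}
as claimed. The main obstacle is the first step: justifying that the norm of the full block upper triangular operator coincides with the norm of its first row. At $w=0$ this is immediate because $W=0$, but at a general $w\in\mathcal D$ the diagonal block $W$ is non-zero; the reduction to $w=0$ must be carried out via the bi-holomorphic automorphism $\theta_w$, using that the resulting unitary transport between the orthonormal bases of $\mathcal N(w)$ and $\mathcal N(0)$ preserves the relevant norms while transforming $V(w)$ into the diagonal representative at the origin.
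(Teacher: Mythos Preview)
Your core computation matches the paper's: both identify the norm with that of the block row built from $E_{ij}\otimes V_{(i,j)}$, compute the $r\times r$ Grammian (your $\sum_l M_lM_l^*$, the paper's row products $W_iW_j^*$) to obtain the diagonal matrix with entries $\sum_{j=1}^s|v_{ij}|^2$, and take the maximum diagonal entry.

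Where you differ is in handling the step you flag as the ``main obstacle'', namely equating the norm of the full block upper-triangular operator with the norm of its off-diagonal part. The paper does not reduce to $w=0$; it invokes the results of \cite[Theorem~1.7]{sastry} and \cite[Theorem~4.2]{vern} cited immediately before the theorem, which give this identification of the norm with $\|\sum_k\partial_kP\otimes N_k\|$ directly at any $w$. Your detour through $w=0$ via homogeneity is a legitimate alternative and is more self-contained, but your appeal to Lemma~\ref{A(w)} and Corollary~\ref{transK} is not quite on target: those statements concern the contractivity condition $\|A(w)^{\rm t}\|_{\ell^2\to C_{\Omega,w}}\le 1$ and the transformation rule for the curvature, not the equality of the specific operator norms $\|(\rho_{{\boldsymbol N}^{(\lambda)}(w)}\otimes I_{rs})(P_{\mathbf A})\|$ at different base points. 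The unitary-transport argument you sketch in your last sentence is the correct ingredient; you would need to carry it out explicitly rather than defer to those lemmas.
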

\begin{proof}
We have
\begin{eqnarray*}
\|\big(\rho_{_{\!{\boldsymbol N}^{(\lambda)}(w)}} \otimes I_{rs}\big)(P_{\mathbf A})\|^2
&=&\|V_1\otimes
E_{11}+\ldots+V_s\otimes E_{1s}+V_{s+1}\otimes E_{21}+\ldots +V_{rs}\otimes E_{rs}\|^2\\
&=&\left\|\left(\begin{smallmatrix}V_1& \ldots & V_s\\\vdots &
\vdots & \vdots\\V_{rs-s+1}& \ldots
& V_{rs}\end{smallmatrix}\right)\right\|^2 =\left\|\left(\begin{smallmatrix}W_1\\
\vdots\\W_{r}\end{smallmatrix}\right)\right\|^2,
\end{eqnarray*}
where $W_i=\big(V_{is-s+1},\ldots, V_{is}\big).$ It is easy to see
that $W_iW_{j}^*=0$ for $i\neq j.$ Furthermore,
$W_iW_{i}^*=\sum_{j=1}^{s}|v_{ij}|^2.$ Hence we have
$$\|(\rho_{_{\!{\boldsymbol N}^{(\lambda)}(w)}} \otimes I_{rs})(P_{\mathbf A})\|^2=\max\{\sum_{i=1}^{s}|v_{1i}|^2,
\ldots, \sum_{i=1}^{s}|v_{ri}|^2\}$$ completing the proof of the
theorem.
\end{proof}
Even for the small class of homomorphisms
$\rho_{_{\!{\boldsymbol N}^{(\lambda)}(w)}}$ discussed here, finding the cb norm of
$\rho_{_{\!{\boldsymbol N}^{(\lambda)}(w)}}$ is not easy. However, we determine when
$\|(\rho_{_{\!{\boldsymbol N}^{(\lambda)}(w)}}\otimes I_{rs})(P_{\mathbf A})\|^2 \leq
1.$ This gives a necessary condition for the complete
contractivity of $\rho_{_{\!{\boldsymbol N}^{(\lambda)}(w)}}.$
\begin{thm}\label{complete}If $\|(\rho _{{\boldsymbol N}^{(\lambda)}(w)}\otimes I_{rs})(P_{\mathbf
A})\|^2 \leq 1,$ then  $\nu \geq s.$
\end{thm}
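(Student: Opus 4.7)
\textbf{Proof proposal for Theorem \ref{complete}.}
The plan is to reduce the inequality to the base point $0$ using the homogeneity of the matrix ball $\mathcal D$, and then apply the explicit formula of Theorem \ref{P_A} once we have computed the normalised $\boldsymbol\alpha_k(0)$'s from the curvature.

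First I would compute the matrix $V$ at $w=0$. By Theorem \ref{curva} we have $\mathcal K_{\bSB_\mathcal D}(0) = -p\,I_{rs}$, and the scaling rule \eqref{transK^l} upgrades this to $\mathcal K^{(\lambda)}(0) = -\nu I_{rs}$ with $\nu = \lambda p$. Plugging into the identity $(-\mathcal K^{(\lambda)}(0)^{\rm t})^{-1} = A(0)^{\rm t}\overline{A(0)}$ from \eqref{curvform}, I may choose $A(0) = \tfrac{1}{\sqrt{\nu}}\,I_{rs}$. Then, in the Gram--Schmidt orthonormal basis for $\mathcal N^{(\lambda)}(0)$ described in Section 2, each column $\boldsymbol\alpha_k(0)$ equals $\tfrac{1}{\sqrt{\nu}}e_k$, so the matrix representations of the local operators take the form $N_k^{(\lambda)}(0) = \bigl(\begin{smallmatrix} 0 & \tfrac{1}{\sqrt{\nu}} e_k^{\rm t}\\ 0 & 0\end{smallmatrix}\bigr)$. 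This is precisely the diagonal configuration that Theorem \ref{P_A} assumes for $V$: in the two-index relabelling, $v_{ij} = \tfrac{1}{\sqrt{\nu}}$ for every $1\leq i\leq r$, $1\leq j\leq s$.

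Next I would feed this $V$ into Theorem \ref{P_A}, which then yields
$$\|(\rho_{_{\!\boldsymbol N^{(\lambda)}(0)}}\otimes I_{rs})(P_{\mathbf A})\|^{2} \;=\; \max_{1\leq i\leq r}\sum_{j=1}^{s}|v_{ij}|^{2} \;=\; \frac{s}{\nu}.$$
The hypothesis specialised to $w=0$ therefore reads $s/\nu \leq 1$, which rearranges to $\nu \geq s$ as desired.

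Finally, for a general $w\in\mathcal D$, the same conclusion follows because the construction is natural under $\mathrm{Aut}(\mathcal D)$: using the bi-holomorphic automorphism $\theta_w$ with $\theta_w(w)=0$, Corollary \ref{transK} and the identity $A(w)^{\rm t} = A(0)^{\rm t} D\theta_w(w)^{-1}$ (from the proof of Lemma \ref{A(w)}), combined with the fact that $D\theta_w(w)^{-1}$ acts as a $C_{\mathcal D,\cdot}$-isometry, show that the localisation at $w$ is intertwined with the one at $0$. Thus the norm computed at $w$ coincides with the value $s/\nu$ obtained above. The main technical obstacle is this last step: one has to check that the intertwining by $D\theta_w(w)^{-1}$ is compatible with tensoring by $I_{rs}$ and applying it to the matrix polynomial $P_{\mathbf A}$, so that the value of $\|(\rho_{_{\!\boldsymbol N^{(\lambda)}(w)}}\otimes I_{rs})(P_{\mathbf A})\|$ genuinely equals its value at the origin. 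Once this is in place, the argument collapses to the one-line inequality $s/\nu \leq 1$.
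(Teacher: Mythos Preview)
Your proposal is correct and follows essentially the same route as the paper. The paper's proof simply invokes Theorem~\ref{P_A} and asserts that $|v_{ij}|^2 = \tfrac{1}{\nu}$ for all $i,j$, then reads off $\max_i \sum_{j=1}^s |v_{ij}|^2 = s/\nu \leq 1$; your write-up supplies the missing justification for the identity $|v_{ij}|^2 = 1/\nu$ (via $\mathcal K^{(\lambda)}(0) = -\nu I_{rs}$ and \eqref{curvform}) and makes the reduction from a general $w$ to the origin explicit, which the paper leaves implicit.
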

\begin{proof}By Theorem \ref{P_A} we have
$$\|(\rho_{_{\!{\boldsymbol N}^{(\lambda)}(w)}} \otimes I_{rs})(P_{\mathbf
A})\|^2=\max\{\sum_{i=1}^{s}|v_{1i}|^2, \ldots,
\sum_{i=1}^{s}|v_{ri}|^2\}.$$ Since $|v_{ij}|^2=\frac{1}{\nu}, 1
\leq i\leq r, 1\leq j\leq s,$ it is immediate that
$\|(\rho_{_{\!{\boldsymbol N}^{(\lambda)}(w)}}(w) \otimes
I_{rs})(P_{\mathbf A})\|^2\leq 1$ implies $\nu \geq s$ completing
the proof of the theorem.
\end{proof}

As a consequence, it follows  that if $1\leq \nu<s$, then the
homomorphism induced by the commuting tuple of the local operators
${\boldsymbol N}^{(\lambda)}(w)$ is not completely contractive.

\subsection{More examples}
We have  discussed the Bergman kernel $\bSB_{\Omega}(w, w)$ for
the domain $\Omega=\{(z_1, z_2): |z_2|
 \leq (1-|z_1|^2)\}
\subset \mathbb C^2.$ The curvature $\mathcal
K_{\bSB_{\Omega}}(w)=\sum_{i, j=1}^{2}T_{ij}(w)dw_i \wedge
d\bar{w}_j$ of the Bergman kernel $\bSB_{\Omega}(w, w)$ is
(cf.\cite[Example 6.2.1]{pflug}):
$$
T_{11}(w)=6\big(\frac{1}{C(w)}-\frac{1}{D(w)}\big)+12|w_1|^2|w_2|^2\big(\frac{1}{C^2(w)}+\frac{1}{D^2(w)}\big),$$
$$T_{12}(w)=\bar{T}_{21}(w)=6w_1\bar{w}_2(1-|w_1|^2)\big(\frac{1}{C^2(w)}+\frac{1}{D^2(w)}\big),$$
$$T_{22}(w)=3(1-|w_1|^2)^2\big(\frac{1}{C^2(w)}+\frac{1}{D^2(w)}\big),$$
where $ C(w):=(1-|w_1|^2)^2-|w_2|^2$ and
$D(w):=3(1-|w_1|^2)^2+|w_2|^2.$ We have  seen that the
polarization $\bSB_{\Omega}^\lambda(z,w)$ of the function
$\bSB_{\Omega}(w,w)^\lambda$
 defines a Hermitian structure for $\mathcal N^{(\lambda)}(w).$ Specializing to $w=0,$  since $-\big (\mathcal K(0)^{\rm t}\big )^{-1}= A(0)^{\rm
 t}\overline{A(0)}, $ we have
 $a_{11}^\lambda(0)=\frac{1}{\sqrt{\lambda T_{11}(0)}}$ and
 $a_{22}^\lambda(0)=\frac{1}{\sqrt{\lambda T_{22}(0)}},$  where
 $(A^{\lambda}(0))^{\rm t}=\left(\begin{smallmatrix}
a_{11}^\lambda(0) & 0\\
0 & a_{22}^\lambda(0)\\
\end{smallmatrix}\right).$
\begin{prop} \label{themm5}The contractivity of the
homomorphism  $\rho_{_{\!{\boldsymbol N}^{(\lambda)}(0)}}$ implies
$16\lambda\geq 5.$
\end{prop}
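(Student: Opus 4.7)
The plan is to exploit the equivalence---recorded earlier in Lemma~\ref{A(w)}---between the contractivity of $\rho_{_{\!{\boldsymbol N}^{(\lambda)}(0)}}$ and the norm inequality
\[
\|A^\lambda(0)^{\mathrm t}\|_{\ell^2 \to C_{\Omega,0}} \leq 1.
\]
The diagonal form of $A^\lambda(0)^{\mathrm t}$ is already recorded in the preceding display, so the task reduces to pinning down the Carath\'eodory norm $C_{\Omega,0}$ at the origin and then carrying out the operator-norm computation with this explicit diagonal matrix.

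The first step is to notice that $\Omega = \{(z_1,z_2) : |z_1|^2 + |z_2| < 1\}$ is a bounded, convex, balanced domain in $\mathbb C^2$: convexity holds because $(z_1,z_2)\mapsto|z_1|^2+|z_2|$ is a sum of convex real functions on $\mathbb R^4$, while the balanced property $|\xi|\leq 1 \Rightarrow \xi\,\Omega \subset \Omega$ is immediate. Invoking Lempert's theorem in the form valid for arbitrary bounded convex domains gives $C_{\Omega,0} = K_{\Omega,0}$, and for balanced domains the Kobayashi metric at the origin coincides with the Minkowski functional
\[
\mu_\Omega(v_1,v_2) = \inf\{t > 0 : (v_1,v_2)/t \in \Omega\} = \frac{|v_2| + \sqrt{|v_2|^2 + 4|v_1|^2}}{2},
\]
the closed form coming from solving $|v_1|^2/t^2 + |v_2|/t = 1$ in $t$. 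In particular $\mu_\Omega(v)\leq 1$ is equivalent to $|v_1|^2 + |v_2| \leq 1$.

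The second step is to substitute $v = A^\lambda(0)^{\mathrm t} u$, which turns the contractivity condition into the elementary inequality
\[
\frac{|u_1|^2}{4\lambda} + |u_2|\sqrt{\tfrac{3}{10\lambda}} \leq 1 \qquad \text{for every } |u_1|^2 + |u_2|^2 \leq 1.
\]
I would then set $|u_2| = s$ and $|u_1|^2 = 1 - s^2$, differentiate in $s$, locate the critical point at $s_* = \sqrt{6\lambda/5}$ (valid for $\lambda \leq 5/6$), and evaluate the maximum to $1/(4\lambda) + 3/10$. Requiring this maximum not to exceed $1$ produces the sought lower bound on $\lambda$.

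The main obstacle is the explicit identification of $C_{\Omega,0}$: although $\Omega$ is not smooth along the coordinate hypersurface $\{z_2 = 0\}$, the extension of Lempert's theorem to arbitrary bounded convex domains is precisely the ingredient that allows one to replace the Carath\'eodory metric by the explicit Minkowski functional. Once that reduction is in hand, the remaining analysis is a routine exercise in one-variable calculus.
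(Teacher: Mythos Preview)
Your approach is essentially the paper's: both reduce contractivity of $\rho_{_{\!{\boldsymbol N}^{(\lambda)}(0)}}$ to the inequality $\|A^\lambda(0)^{\mathrm t}\|_{\ell^2\to C_{\Omega,0}}\le 1$, insert the diagonal entries, and optimize over the unit sphere. The paper compresses your one-variable optimization into the single algebraic condition $(2(a_{11}^\lambda(0))^2-1)^2\le 1-(a_{22}^\lambda(0))^2$, which is algebraically equivalent to your $a_{11}^2+a_{22}^2/(4a_{11}^2)\le 1$ when the critical point $s_*$ lies in $[0,1]$. Your explicit identification of $C_{\Omega,0}$ with the Minkowski functional via convexity, balancedness, and Lempert is a genuine clarification the paper leaves implicit.

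Two small points. First, the equivalence ``contractivity $\Leftrightarrow\|A(0)^{\mathrm t}\|\le 1$'' comes from \cite[Theorem~5.2]{GM}, not Lemma~\ref{A(w)}; the latter only transfers the norm inequality between points of a \emph{homogeneous} domain, and $\Omega$ here is not homogeneous. Second, carry your arithmetic to the end: with $a_{11}^2=1/(4\lambda)$ and $a_{22}^2=3/(10\lambda)$ the condition $1/(4\lambda)+3/10\le 1$ gives $\lambda\ge 5/14$, not $5/16$. This is \emph{stronger} than the stated bound and hence still proves the proposition, but it does not match ``$16\lambda\ge 5$'' on the nose; the same discrepancy is present in the paper's own final step (and the displayed value $a_{22}^\lambda(0)=3/\sqrt{10\lambda}$ should read $\sqrt{3}/\sqrt{10\lambda}$).
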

\begin{proof}
We have $a_{11}^\lambda(0)=\frac{1}{2\sqrt{\lambda}},
a_{12}^\lambda(0)=0, a_{22}^\lambda(0)=\frac{3}{\sqrt{10\lambda}}.
$ Contractivity of the homomorphism $\rho_{_{\!{\boldsymbol N}^{(\lambda)}(0)}}$ is
equivalent to $\|(A^{\lambda}(0))^{\rm t}\|_{\ell^2 \rightarrow
C_{\Omega, 0}}\leq 1 .$ This is equivalent to
$(2(a_{11}^\lambda(0))^2-1)^2\leq (1-(a_{22}^\lambda(0))^2).$
Hence  $16\lambda\geq 5$ completing our proof.
\end{proof}
The bi-holomorphic automorphism group of $\Omega$ is not
transitive. So the contractivity of the homomorphism
$\rho_{_{\!{\boldsymbol N}^{(\lambda)}(0)}}$   does not necessarily imply
the contractivity of the homomorphism
$\rho_{_{\!{\boldsymbol N}^{(\lambda)}(w)}}, w \in \Omega.$  Determining
which of the homomorphism $\rho_{_{\!{\boldsymbol N}^{(\lambda)}(w)}}$ is
contractive appears to be a hard problem.

Let $P_{\mathbf A}:\Omega \rightarrow(\mathcal M_2)_1$ be the
matrix valued polynomial on $\Omega$ defined by $P_{\mathbf
A}(z)=z_1A_1+z_2A_2$ where $A_1= I_2$ and
$A_2=\left(\begin{smallmatrix}
0 & 1\\
    0    & 0
\end{smallmatrix}\right).$ It is natural to ask when $\rho_{_{\!{\boldsymbol N}^{(\lambda)}(w)}}$ is completely contractive.
As before, we only obtain a necessary condition using the
polynomial $P_{\mathbf A}.$

\begin{prop}\label{themm22}
$\|\rho^{(2)}_{{\boldsymbol N}^{(\lambda)}(0)}(P_{\mathbf A})\|\leq 1$ if and
only if $\lambda \geq \frac{11}{20}.$
\end{prop}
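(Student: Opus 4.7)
The plan is to compute the operator norm of the $6 \times 6$ matrix $\rho^{(2)}_{{\boldsymbol N}^{(\lambda)}(0)}(P_{\mathbf A})$ directly from formula \eqref{homN}. Since $P_{\mathbf A}(0)=0$, the diagonal block vanishes, and we have
\[
\rho^{(2)}_{{\boldsymbol N}^{(\lambda)}(0)}(P_{\mathbf A}) \;=\; \rho_{{\boldsymbol N}^{(\lambda)}(0)}(z_1)\otimes I_2 \;+\; \rho_{{\boldsymbol N}^{(\lambda)}(0)}(z_2)\otimes A_2,
\]
where by \eqref{homN} each $\rho_{{\boldsymbol N}^{(\lambda)}(0)}(z_k)$ is the $3\times 3$ matrix whose only nonzero entry is $a_{kk}^{\lambda}(0)$ in position $(0,k)$. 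The values of $a_{11}^{\lambda}(0)$ and $a_{22}^{\lambda}(0)$ come from the identity $\bigl(A^{(\lambda)}(0)\bigr)^{\rm t}\overline{A^{(\lambda)}(0)} = \bigl(\lambda\,\mathcal K(0)^{\rm t}\bigr)^{-1}$ combined with the diagonal Bergman metric at the origin, $T_{11}(0)=4$ and $T_{22}(0)=10/3$; thus $a_{11}^{\lambda}(0)^2 = \tfrac{1}{4\lambda}$ and $a_{22}^{\lambda}(0)^2 = \tfrac{3}{10\lambda}$.

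Viewed in a $3\times 3$ block form with $2\times 2$ blocks, $\rho^{(2)}(P_{\mathbf A})$ has $a_{11}^{\lambda}(0)\,I_2$ in position $(0,1)$, $a_{22}^{\lambda}(0)\,A_2$ in position $(0,2)$, and zero elsewhere. In particular its range is contained in the two-dimensional top block. Restricting to the four-dimensional subspace on which it acts nontrivially, it becomes the $2\times 4$ matrix
\[
M \;=\; \begin{pmatrix} a_{11}^{\lambda}(0) & 0 & 0 & a_{22}^{\lambda}(0) \\ 0 & a_{11}^{\lambda}(0) & 0 & 0 \end{pmatrix}.
\]
Using $A_2 A_2^* = \mathrm{diag}(1,0)$, one immediately computes $MM^* = \mathrm{diag}\bigl((a_{11}^{\lambda}(0))^2 + (a_{22}^{\lambda}(0))^2,\; (a_{11}^{\lambda}(0))^2\bigr)$, and hence $\|\rho^{(2)}(P_{\mathbf A})\|^2 = (a_{11}^{\lambda}(0))^2 + (a_{22}^{\lambda}(0))^2$.

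Substituting the values from the first paragraph gives $\|\rho^{(2)}(P_{\mathbf A})\|^2 = \tfrac{1}{4\lambda}+\tfrac{3}{10\lambda} = \tfrac{11}{20\lambda}$, which is $\leq 1$ precisely when $\lambda \geq \tfrac{11}{20}$. The argument is essentially elementary linear algebra with a single nilpotent matrix, so there is no serious obstacle: the only point that needs care is the correct translation of the Bergman-metric entries $T_{ii}(0)$ into the diagonal entries $a_{ii}^{\lambda}(0)^2$ via the curvature identity $(-\mathcal K(w)^{\rm t})^{-1}=A(w)^{\rm t}\overline{A(w)}$ together with $\mathcal K^{(\lambda)}=\lambda\mathcal K$; once that translation is in hand, the rest is a mechanical computation of the operator norm of the explicit $2\times 4$ matrix $M$.
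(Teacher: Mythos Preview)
Your proof is correct and follows essentially the same route as the paper's. The paper's own argument simply asserts that $\|\rho^{(2)}_{{\boldsymbol N}^{(\lambda)}(0)}(P_{\mathbf A})\|\leq 1$ is equivalent to $(a_{11}^\lambda(0))^2+(a_{22}^\lambda(0))^2\leq 1$ and then substitutes $\tfrac{1}{4\lambda}+\tfrac{3}{10\lambda}=\tfrac{11}{20\lambda}$; you supply the missing justification by writing out the off-diagonal block explicitly and computing $MM^*$, which is a welcome elaboration rather than a different method.
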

\begin{proof}
Suppose that  $\|\rho^{(2)}_{{\boldsymbol N}^{(\lambda)}(0)}(P_{\mathbf
A})\|\leq 1.$  Then we have
$(a_{11}^\lambda(0))^2+(a_{22}^\lambda(0))^2\leq 1.$ Hence
$\lambda \geq \frac{11}{20}.$ The converse verification is also
equally easy.
\end{proof}

We conclude that if $\frac{5}{16}\leq \lambda < \frac{11}{20},$
the homomorphism $\rho_{{\boldsymbol N}^{(\lambda)}}(0)$ is
contractive but not completely contractive. An explicit
description of the set
$$\{\lambda: \|\rho^{(2)}_{{\boldsymbol N}^{(\lambda)}(w)}(P_{\mathbf A})
\|_{\rm op}\leq 1, w \in \Omega\}$$ would certainly  provide
greater insight. However, it appears to be quite intractable, at
least for now.

The formula for the Bergman kernel for the domain
$$\Omega:=\{(z_1, z_2, z_3): |z_2|^2
 \leq (1-|z_1|^2)(1-|z_3|^2), 1-|z_3|^2\geq 0\}
\subset \mathbb C^3.$$ is given in Lemma \ref{bergman}, which gives  
$\bSB_{\Omega}^\lambda(z,0)=1$ and
$\partial_{i}\bSB_{\Omega}^\lambda(z,0)=0$ for $i=1, 2, 3.$ Hence
the desired curvature matrix is of the form $$ \big ( \!\! \big
(\, (\partial_i\bar{\partial}_j \log\bSB_{\Omega}^\lambda) (0,
0)\big ) \!\! \big )_{i,j = 1}^m.$$ Let
$T_{ij}(0)=\partial_i\bar{\partial}_j \log\bSB_{\Omega}^\lambda
(0, 0),$ that is, $\mathcal K_{\bSB_{\Omega}}(0)=\sum_{i,
j=1}^{3}T_{ij}(0)dw_i \wedge d\bar{w}_j.$ An easy computation
shows that $T_{11}(0)=3\lambda=T_{33}(0),
T_{22}(0)=\frac{9\lambda}{2}$ and $T_{ij}(0)=0$ for $i \neq j.$ As
before, we have $a_{11}^\lambda(0)=\frac{1}{\sqrt{T_{11}(0)}},
 a_{22}^\lambda(0)=\frac{1}{\sqrt{T_{22}(0)}}$ and
 $a_{33}^\lambda(0)=\frac{1}{\sqrt{T_{33}(0)}},$ where
$A(0)^{\rm t}=\left(\begin{smallmatrix}
a_{11}^{\lambda}(0) & 0 &0\\
0 & a_{22}^{\lambda}(0) &0 \\
0 & 0& a_{33}^{\lambda}(0)\\
\end{smallmatrix}\right).$

\begin{prop}\label{themm10} The contractivity of the
homomorphism  $\rho_{_{\!{\boldsymbol N}^{(\lambda)}(0)}}$  implies
$\lambda\geq \frac{1}{4}.$
\end{prop}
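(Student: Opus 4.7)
The plan is to follow the strategy of Proposition \ref{themm5} and exploit the explicit diagonal form of $A^{\lambda}(0)$. Contractivity of $\rho_{_{\!\boldsymbol{N}^{(\lambda)}(0)}}$ forces $\|(A^{\lambda}(0))^{\rm t}\|_{\ell^2 \to C_{\Omega,0}} \leq 1$; that is, $(A^{\lambda}(0))^{\rm t}x$ lies in the closed unit ball of the Carath\'eodory norm at $0$ for every $x\in\mathbb C^3$ with $\|x\|_2 \leq 1$. Because the balanced domain $\Omega$ is defined by $|z_2|^2 \leq (1-|z_1|^2)(1-|z_3|^2)$ (together with $|z_3|\leq 1$), applying this defining inequality to the vector $\bigl(a_{11}^\lambda x_1,\, a_{22}^\lambda x_2,\, a_{33}^\lambda x_3\bigr)$ yields the necessary condition
\[
(a_{22}^\lambda)^2\, |x_2|^2 \;\leq\; \bigl(1 - (a_{11}^\lambda)^2 |x_1|^2\bigr)\bigl(1 - (a_{33}^\lambda)^2 |x_3|^2\bigr)
\]
for every $x \in \mathbb C^3$ with $|x_1|^2 + |x_2|^2 + |x_3|^2 \leq 1$.

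Next, I would calibrate a test vector using the symmetry $a_{11}^\lambda = a_{33}^\lambda$ built into the setup. Setting $|x_1|^2 = |x_3|^2 = t$ and $|x_2|^2 = 1 - 2t$ for $t\in[0,1/2]$, substituting the explicit values $(a_{11}^\lambda)^2 = (a_{33}^\lambda)^2 = 1/(3\lambda)$ and $(a_{22}^\lambda)^2 = 2/(9\lambda)$, and rearranging, the required condition collapses to the one-variable inequality
\[
Q(t) \;:=\; \Bigl(1 - \tfrac{t}{3\lambda}\Bigr)^{\!2} \;-\; \tfrac{2(1-2t)}{9\lambda} \;\geq\; 0, \qquad t \in [0, 1/2].
\]

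The final step is to minimize $Q$. Since $Q$ is a quadratic in $t$ with positive leading coefficient $1/(9\lambda^2)$, it has a unique global minimum at the critical point, which a short calculation identifies as $t_{\ast} = \lambda$; this lies in $[0,1/2]$ precisely when $\lambda \leq 1/2$. A direct evaluation gives $Q(\lambda) = 8/9 - 2/(9\lambda)$, and the requirement $Q(\lambda) \geq 0$ forces $\lambda \geq 1/4$, while the regime $\lambda > 1/2$ satisfies the conclusion trivially. The only subtle point is the judicious symmetric choice of test vector; once it is in place, everything reduces to a routine single-variable optimization, so I do not anticipate a substantive obstacle.
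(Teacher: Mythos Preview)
Your proof is correct and follows essentially the same strategy as the paper: both translate contractivity into the condition $\|(A^{\lambda}(0))^{\rm t}\|_{\ell^2 \to C_{\Omega,0}} \leq 1$, interpret the Carath\'eodory unit ball at $0$ via the defining inequality of the balanced (convex) domain $\Omega$, and then extract a scalar inequality in $\lambda$. The only difference is cosmetic---where the paper asserts the single inequality $|a_{11}^{\lambda}(0)|^{2}\bigl(1-|a_{33}^{\lambda}(0)|^{2}\bigr)\geq |a_{22}^{\lambda}(0)|^{2}-|a_{33}^{\lambda}(0)|^{2}$ without derivation, you arrive at the same bound $\lambda\geq 1/4$ by an explicit one-parameter optimization along the symmetric test vectors $|x_1|^2=|x_3|^2=t$.
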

\begin{proof}
From Lemma (\ref{lemm:main4}) we have
$a_{11}^{\lambda}(0)=\frac{1}{\sqrt{3\lambda}},
a_{12}^{\lambda}(0)=a_{13}^{\lambda}(0)=0,
a_{22}^{\lambda}(0)=\frac{\sqrt{2}}{3\sqrt{\lambda}},
a_{23}^{\lambda}(0)=0$ and
$a_{33}^{\lambda}(0)=\frac{1}{\sqrt{3\lambda}}.$ The contractivity of the homomorphism  $\rho_{_{\!{\boldsymbol N}^{(\lambda)}(0)}}$
is the requirement $\big \| A(0)^{\rm t}\|_{\ell^2 \to
 C_{\Omega, 0}}^2 \leq 1,$ which is equivalent to
$|a_{11}^{\lambda}(0)|^2(1-| a_{33}^{\lambda}(0)|^2)\geq
(|a_{22}^{\lambda}(0)|^2-| a_{33}^{\lambda}(0)|^2).$  Hence we
have $\lambda\geq \frac{1}{4}.$
\end{proof}
For our final example, let $P_{\mathbf A}:\Omega
\rightarrow(\mathcal M_2)_1$ be the matrix valued polynomial
on $\Omega$ defined by $P_{\mathbf A}(z)=z_1A_1+z_2A_2+z_3A_3$
where $A_1=\left(\begin{smallmatrix}
1 & 0\\
    0    & 0
    \end{smallmatrix}\right),
A_2=\left(\begin{smallmatrix}
0 & 1\\
    0    & 0
    \end{smallmatrix}\right), A_3=\left(\begin{smallmatrix}
0 & 0\\
    0    & 1
    \end{smallmatrix}\right).$
\begin{prop}\label{themm9}
$\|\rho^{(2)}_{{\boldsymbol N}^{(\lambda)}(0)}(P_{\mathbf A})\|\leq 1$ if and
only if $\lambda \geq \frac{5}{9}.$
\end{prop}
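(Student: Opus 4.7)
The plan is to apply the homomorphism $\rho_{_{\!{\boldsymbol N}^{(\lambda)}(0)}}$ entrywise to the $2\times 2$ matrix
$$P_{\mathbf A}(z) = \begin{pmatrix} z_1 & z_2 \\ 0 & z_3 \end{pmatrix}$$
and compute the resulting operator norm on $\mathbb C^{8} \cong \mathbb C^2 \otimes \mathcal N^{(\lambda)}(0)$. Using formula \eqref{homN} with $w=0$, together with the fact established in the paragraph preceding Proposition \ref{themm10} that $A(0)$ is diagonal with diagonal entries $a_{ii}^\lambda(0)$, I obtain
$$\rho_{_{\!{\boldsymbol N}^{(\lambda)}(0)}}(z_i) \;=\; a_{ii}^\lambda(0)\, e_0 e_i^{\rm t} \qquad (i=1,2,3),$$
where I identify $\mathcal N^{(\lambda)}(0)$ with $\mathbb C^4$ via the orthonormal basis $\{\varepsilon_0,\varepsilon_1,\varepsilon_2,\varepsilon_3\}$.

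The key structural observation is that each $\rho(z_i)$ is rank one with range contained in $\mathbb C\, e_0$. Consequently, the cross products $\rho(z_i)\rho(z_j)^*$ vanish whenever $i\neq j$ and equal $|a_{ii}^\lambda(0)|^2\, e_0 e_0^{\rm t}$ when $i=j$. Writing
$$M := \rho^{(2)}_{{\boldsymbol N}^{(\lambda)}(0)}(P_{\mathbf A}) \;=\; \begin{pmatrix} \rho(z_1) & \rho(z_2) \\ 0 & \rho(z_3) \end{pmatrix},$$
the product $MM^*$ has a block diagonal structure whose only non-zero eigenvalues are $|a_{11}^\lambda(0)|^2 + |a_{22}^\lambda(0)|^2$ and $|a_{33}^\lambda(0)|^2$. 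Hence
$$\|M\|^2 \;=\; \max\bigl(|a_{11}^\lambda(0)|^2 + |a_{22}^\lambda(0)|^2,\; |a_{33}^\lambda(0)|^2\bigr).$$

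Finally, substituting $|a_{11}^\lambda(0)|^2 = |a_{33}^\lambda(0)|^2 = \tfrac{1}{3\lambda}$ and $|a_{22}^\lambda(0)|^2 = \tfrac{2}{9\lambda}$ from the computation in Proposition \ref{themm10} gives $\|M\|^2 = \tfrac{5}{9\lambda}$, so $\|M\| \leq 1$ is exactly $\lambda \geq \tfrac{5}{9}$. The main obstacle is conceptual rather than technical: one must correctly interpret the entrywise amplification and recognise the rank-one structure of the operators $\rho(z_i)$, so that the off-diagonal block of $MM^*$ drops out. Once these are in hand, computing $\|M\|$ is a direct exercise in block matrix arithmetic.
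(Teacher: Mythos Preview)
Your proof is correct and follows essentially the same route as the paper: both arrive at the norm formula $\|M\|^2 = \max\{|a_{11}^\lambda(0)|^2 + |a_{22}^\lambda(0)|^2,\; |a_{33}^\lambda(0)|^2\}$ and then substitute the values of the $a_{ii}^\lambda(0)$. The paper simply asserts this formula (implicitly relying on the computation in Theorem \ref{P_A}), whereas you supply the block-matrix justification explicitly via the rank-one structure of the $\rho(z_i)$ and the vanishing of the off-diagonal products $\rho(z_i)\rho(z_j)^*$; the underlying mechanism is the same.
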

\begin{proof}
Suppose that  $\|\rho^{(2)}_{{\boldsymbol N}^{(\lambda)}(0)}(P_{\mathbf
A})\|\leq 1.$  Then we have
$$\max\{(a_{11}^\lambda(0))^2+(a_{22}^\lambda(0))^2,
(a_{33}^\lambda(0))^2\} \leq 1.$$ Hence $\lambda \geq
\frac{5}{9}.$ The converse statement is easily verified.
\end{proof}

Thus if $\frac{1}{4}\leq \lambda < \frac{5}{9},$ the homomorphism
$\rho_{_{\!{\boldsymbol N}^{(\lambda)}(0)}}$ is contractive but not
completely contractive.

\subsection*{\bf Acknowledgment}

The authors thank Michel Dristchel and H. Upmeier for
their comments on this work which were very helpful in preparing the final draft. The authors also thank  Dr. Cherian Varughese 
for his suggestions which resulted in many improvements throughout the paper.


\begin{thebibliography}{99}

\bibitem{arazy} J. Arazy and  G. Zhang,
               \textit{Homogeneous multiplication operators on bounded symmetric
               domains},
               J. Funct. Anal., {\bf 202} (2003), 44 - 66.

\bibitem{BB} B. Bagchi and G. Misra,
             \textit{Contractive homomorphisms and tensor product norms},
             J. Int. Eqns. Operator Th., {\bf 21} (1995), 255 - 269.

\bibitem{shibu} S. Biswas, D. K. Keshari and G. Misra,
                \textit{Infinitely divisible metrics and curvature inequalities for operators in the Cowen-Douglas class},
               J. London MathSoc., {\bf 88} (2013), 941 - 956.

\bibitem {rajendra} R. Bhatia,
                   \textit{ Matrix Analysis, }
                   Springer (1997).


\bibitem {bhatia} R. Bhatia,
                   \textit{Positive Definite  Matrices},
                   Hindustan Book Agency (2007).

\bibitem{cowen}  M. J. Cowen and  R. G. Douglas,
                \textit{Complex geometry and operator theory},
                Acta Math., {\bf 141} (1978),  187 - 261.


\bibitem{douglas}  M. J. Cowen and  R. G. Douglas,
                \textit{Operators possessing an open set of
                eigenvalues},
                Functions, series, operators, {\bf Vol. I, II} (Budapest, 1980),
               323 - 341,
                Colloq. Math. Soc. J\`{a}nos Bolyai, 35, North-Holland, Amsterdam (1983).



\bibitem{curto}  R. E. Curto and N. Salinas,
               \textit{Generalized Bergman kernels and the Cowen-Douglas
               theory},
               Amer. J. Math., {\bf 106} (1984),  447 - 488.

\bibitem{FK} J. Faraut and A. Kor\'{a}nyi,
            \textit{Function spaces and reproducing kernels on bounded symmetric domains,}
            J. Funct. Anal., {\bf 88} (1990), 64 - 89.

\bibitem{halmos}     P. R. Halmos,
               \textit{Finite-dimensional vector spaces,}
               v. Nostrand (1958).

\bibitem{hal}     P. R. Halmos,
               \textit{A Hilbert Space Problem Book,}
               Springer-Verlag (1974).

\bibitem{harris} L. A. Harris,
               \textit{Bounded symmetric homogenous domains in infinite dimensional spaces,}
               in Proceedings, Infinite Dimensional Holomorphic, Lecture Notes in Mathematics, {\bf 364},
               Springer-Verlag, NewYork/Berlin (1974).
               
\bibitem{hua} L. K. Hua,
             \textit{ Harmonic analysis of functions of several complex variables in theclassical domains,}
             Translations of Mathematical Monographs, {\bf 6},
              American Mathematical Society  (1979).



\bibitem{pflug} M. Jarnicki and P. Pflug,
               \textit{Invariant Distances and Metrics in Complex Analysis.}


\bibitem{krantz} S. G. Krantz,
                \textit{Function theory of several complex variables,}
                John Wiley and Sons, New York, (1982).

\bibitem{Adam} A. Kor\'{a}nyi,
              \textit{Analytic invariants of bounded symmetric domains,}
              Proc. Amer. Math. Soc., {\bf 19} (1968) 279 - 284.



\bibitem{misra}G. Misra,
              \textit{Curvature inequalities and extremal properties of bundle
              shifts},
              J. Operator Theory, {\bf 11} (1984), 305 - 317.

\bibitem{gmisra}
G. Misra, \textit{Completely contractive modules and Parrott's example}, Acta. Math. Hungarica, {\bf 63} (1994), 291 - 303.

\bibitem{GM}  G. Misra and N. S. N. Sastry,
             \textit{Contractive modules, extremal problems and curvature
             inequalities},
             J. Funct. Anal., {\bf 88} (1990), 118 - 134.


\bibitem{sastry}  G. Misra and N. S. N. Sastry,
                 \textit{Completely contractive modules and associated extremal problems,}
                 J. Funct. Anal., {\bf 91 } (1990), 213 - 220.


\bibitem{paulsen} V. Paulsen,
                 \textit{Completely Bounded Maps and Operator Algebras,}
                 Cambridge University Press (2002).



\bibitem{vern} V. Paulsen,
                \textit{Representations of function algebras, abstract operator spaces, and Banach space geometry, }
                 J. Funct. Anal., {\bf 109} (1992), 113 - 129.

\bibitem{rudin} W. Rudin,
               \textit{Function Theory in the Unit Ball of $\mathbb C^n,$}
                Springer (2008).





\end{thebibliography}
\end{document}